  \providecommand\BibTeX{{%
    \normalfont B\kern-0.5em{\scshape i\kern-0.25em b}\kern-0.8em\TeX}}}
\numberwithin{equation}{section}
\def\la{\langle}
\def\ra{\rangle}
\def\sto{\ {\overset{*}{\to}\ }}
\newcommand{\KK}{\ensuremath{\mathbf k}}  
\newcommand{\N}{\ensuremath{\mathbb N}}
\newcommand{\s}{\ensuremath{\mathbb S}}
\newcommand{\Z}{\ensuremath{\mathbb Z}}
\def\dim{\mbox{dim\,}}
\newcommand \f{\mathcal{F}}         
\def\height{\mbox{\bf ht\,}} 
\def\length{\mbox{length\,}}
\def\rtree{\prec} 
\def\reqtree{\preceq} 
\def\Zpl{\mbox{\bf Z}_+}
\def\ass{\: {\mathcal A}ss \:}
\def\p{ {\mathcal P} }
\begin{document}
\fancyhead{}
\title{Wilf classes of non-symmetric operads}


\author{Andrey T. Cherkasov}
\email{atcherkasov@hotmail.com}
\affiliation{%
	\institution{HSE University}
	\streetaddress{Myasnitskaya str. 20}
	\city{Moscow} 
	\country{Russia} 
	\postcode{101000}
}

\author{Dmitri Piontkovski}
\authornote{Corresponding author.}
\email{dpiontkovski@hse.ru}
\affiliation{%
	\institution{HSE University}
  \streetaddress{Myasnitskaya str. 20}
  \city{Moscow} 
  \country{Russia} 
  \postcode{101000}
}


%
%
%
%
%
%

\renewcommand{\shortauthors}{Cherkasov and Piontkovski}

\begin{abstract}
Two operads are said to belong to the same Wilf class if they have the same generating series. We discuss possible Wilf classifications of non-symmetric operads with monomial relations. As a corollary, this would give the same classification for the operads with a finite Groebner basis.
 
Generally, there is no algorithm to decide whether two finitely presented operads belong to the same Wilf class. Still, we show that if an operad has a finite Groebner basis, then the monomial basis of the operad forms an unambiguous context-free language. Moreover, we discuss the deterministic grammar which defines the language. The generating series of the operad can be obtained as a result of an algorithmic elimination of variables from the algebraic system of equations defined by the Chomsky–Sch{\"u}tzenberger enumeration theorem. We then focus on the case of binary operads with a single relation. The approach is based on the results by Rowland on pattern avoidance in binary trees. We improve and refine  Rowland's calculations and empirically confirm his conjecture. Here we use both the algebraic elimination and the direct calculation of formal power series from algebraic systems of equations. Finally,  we discuss the connection of Wilf classes with algorithms for calculation of the Quillen homology of operads.
\end{abstract}

\begin{CCSXML}
	<ccs2012>
	<concept>
	<concept_id>10002950.10003624.10003625.10003629</concept_id>
	<concept_desc>Mathematics of computing~Generating functions</concept_desc>
	<concept_significance>500</concept_significance>
	</concept>
	<concept>
	<concept_id>10010147.10010148.10010149.10010150</concept_id>
	<concept_desc>Computing methodologies~Algebraic algorithms</concept_desc>
	<concept_significance>500</concept_significance>
	</concept>
	</ccs2012>
\end{CCSXML}

\ccsdesc[500]{Mathematics of computing~Generating functions}
\ccsdesc[500]{Computing methodologies~Algebraic algorithms}

\keywords{nonsymmetric operads,
	unambigous grammar,
	Wilf class,
	Groebner bases in operads,
	tree pattern avoidance,
	system of algebraic equations,
	generating function,
	operad homology}

\maketitle

\section{Introduction}

An algebraic operad (either symmetric or not) is the union of a sequence of vector spaces, $P = P_0 \cup P_1 \cup \dots$ 
So, the first invariant of the operad is the sequence of dimensions $\dim P_0, \dim P_1, \dots$
either per se or in the form of the generation function, which is called the generating series of the operad. 

Suppose that we know a Groebner basis of the operad. Note that the generating series is equal to the one of the associated monomial operad. 
The generating series of the monomial operad is equal to the generating series of the set of trees avoiding certain patterns, see~\cite{kp}. 
These patterns correspond to the leading monomials of the Groebner basis.
Two sets of tree patterns are called {\em Wilf equivalent} if the corresponding sets of avoiding trees have the same generating series~\cite{rowland2010pattern, dots2012}. Similarly, we call two operads (with the same generating sets)  Wilf equivalent if their generating series coincide. 

We see that two operads belong to the same Wilf equivalence class if  the sets of leading monomials of their Groebner bases (with respect to the same generators, for arbitrary orderings) are Wilf equivalent as the sets of tree patterns. Since both operads are defined via the same sequence of vector space dimensions $\dim P_0, \dim P_1, \dots$, in some (weak) sense, they can be considered as flat deformations of each other.  Under some additional conditions (such as Koszulity) all operads of the same Wilf classe have the same homological invariants. Wilf equivalence can be considered as the weakest version of the isomorphism of operads. 

In this paper, we focus on non-symmetric operads. 
Is there an algorithm to determine if two finitely presented operads (defined by finite lists of generators 
and relations over a computable\footnote{We call a field computable if there exist finite presentations for all 
its elements and  algorithms for the arithmetic operations.} field) are Wilf equivalent? Generally, there is not. Such a general algorithm does not exist even if we assume 
that the operads are quadratic (see Proposition~\ref{prop:non-exist} below).

A number of important operads, however, have finite Groebner bases, so that they belong to the Wilf classes of finitely presented monomial operads. 
The generating series of the finitely presented monomial operads are algebraic functions~\cite{kp}. 
To determine the Wilf class, one needs 
to (1) construct a system of algebraic equations which defines the generating series and (2)  find the generating series from the system. The problem (2) is a standard problem of the algebraic elimination theory; its solution is based on the 
Groebner bases theory. Three algorithms for the problem (1) are discussed in~\cite{kp} and one algorithm is given  in~\cite{giraudo2020tree};
the first two  are generalized versions of the algorithms by Rowland~\cite{rowland2010pattern} for 
enumeration pattern avoiding binary trees. 

Here we go further by proving that such a series is $\N$-algebraic, that is, it is equal to the generating function of some unambiguous context-free language. 
It follows from 
\begin{theorem}[Corollary~\ref{prop:operad_cf}]
	\label{th:operads_cf_intro}
	Let $P$ be a finitely generated non-symmetric operad having a finite Groebner basis of relations.
Then its natural monomial basis forms a deterministic context-free language. 
\end{theorem}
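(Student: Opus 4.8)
\emph{Overview and encoding.} The plan is to realise the natural monomial basis of $P$ first as a language of labelled planar rooted trees, then to recognise that language by a finite tree automaton, and finally to translate the automaton into a deterministic pushdown automaton reading the trees in prefix notation. Fix the finite generating set $X$ of $P$ and view it as a ranked alphabet, the rank of a generator being its arity; add one nullary symbol $\ast$ for leaves and set $\Sigma=X\cup\{\ast\}$. Every tree monomial of the free non-symmetric operad on $X$ is a planar rooted tree whose internal vertices are labelled by $X$ with matching out-degree, and I encode it by its depth-first (Polish) word $w(T)\in\Sigma^{*}$. This is a bijection onto the set $D$ of well-formed prefix words, which is the classical generalised Dyck language and is deterministic context-free: a DPDA pushes, for each symbol of rank $k$, the number of arguments still expected, and pops when a subtree is completed. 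Hence it suffices to show that the image of the monomial basis is deterministic context-free inside $D$.

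\emph{Avoidance as a regular tree language.} By~\cite{kp} the monomial basis of $P$ consists precisely of the tree monomials not divisible by any leading monomial of the given Groebner basis, where a tree monomial $\beta$ is divisible by $b_j$ if $\beta$ is obtained from $b_j$ by grafting tree monomials onto its leaves and then grafting the result onto a leaf of a further tree monomial; concretely, $b_j$ sits inside $\beta$ as a root-containing connected family of internal vertices whose own leaves lie at arbitrary vertices of $\beta$. I claim that, given the finite set $B=\{b_1,\dots,b_m\}$ of leading monomials, the set of avoiding trees is recognised by a deterministic bottom-up tree automaton. Let $\mathrm{Pre}(B)$ be the finite set of all upper truncations of the $b_j$ (trees obtained by keeping an ancestor-closed set of internal vertices and cutting the rest down to leaves). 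To a subtree $T_v$ of $T$ I assign the state $(S_v,\phi_v)$, where $S_v\subseteq\mathrm{Pre}(B)$ collects the truncations matching $T_v$ with their root at $v$, and $\phi_v\in\{0,1\}$ records whether some $b_j$ already occurs somewhere inside $T_v$. Since a truncated pattern matches $f(T_1,\dots,T_k)$ at its root iff the root carries the label $f$ and, for every $i$, its $i$-th child matches $T_i$ at the root (a single leaf matching anything), the map $(S_1,\phi_1),\dots,(S_k,\phi_k)\mapsto(S_v,\phi_v)$ is well defined, with $\phi_v$ the disjunction of the $\phi_i$ and of the assertion that $b_j\in S_v$ for some $j$. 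Finiteness of $B$ makes this a genuine finite tree automaton, and the monomial basis is exactly the set of trees whose root state has $\phi=0$.

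\emph{From the tree automaton to a DPDA.} It remains to recognise $\{\,w(T):T\ \text{avoids}\ B\,\}$ by a DPDA $M$ simulating the automaton $\mathcal A$ built above. Reading the prefix word left to right, $M$ stores on its stack the partially computed bottom-up run: on reading a symbol $f$ of rank $k\ge 1$ it pushes the record ``building an $f$-node, $k$ children still missing, child-states so far empty''; these records form a finite stack alphabet because the list of recorded child-states has bounded length. On reading $\ast$, or upon completing a node, $M$ obtains the $\mathcal A$-state of a finished subtree and performs $\varepsilon$-moves that pop the stack, feed this state to the parent record, and collapse the chain of nodes that thereby become complete, holding the propagated state in the finite control; when the stack empties, the control carries the state of the whole tree, and $M$ accepts iff the input is exhausted and that state has $\phi=0$. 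Every $\varepsilon$-move strictly lowers the stack, so no infinite $\varepsilon$-loop arises, and at most one move is ever applicable; thus $M$ is deterministic and $L(M)$ is exactly the prefix-encoded monomial basis, which is therefore a deterministic context-free language.

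\emph{Main obstacle.} The substantive step is the middle one: making the operadic divisibility relation precise — in particular the point that the leaves of a forbidden pattern absorb whole subtrees — and verifying that ``the set of truncated patterns matching at the root of $T_v$'' is exactly the right finite invariant, i.e.\ that it is computable from the children's data and suffices to detect every occurrence anywhere in the tree. The first and third steps amount to the standard fact that the prefix encoding of a recognisable tree language is deterministic context-free; in the write-up I would either spell out the DPDA as above or invoke this fact directly, and I would note in passing that the same bottom-up automaton yields the deterministic grammar (nonterminals indexed by $\mathcal A$-states) referred to in the introduction.
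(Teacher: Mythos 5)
Your proposal is correct in outline but follows a genuinely different route from the paper. The paper never mentions tree automata: it fixes $d$ as the maximal height of a forbidden pattern, partitions the avoiding language $L(X|Y)$ into classes $M_t$ indexed by the avoiding trees $t$ of height at most $d$ (in effect, by the height-$d$ truncation of a tree), writes down an explicit context-free grammar with one nonterminal $T_t$ per such $t$, proves by hand that this grammar is unambiguous and deterministic (a forced-handle argument), and concludes using prefix-freeness. Your route --- recognize the avoiding trees by a deterministic bottom-up tree automaton and simulate it by a DPDA on the Polish encoding --- is more automata-theoretic and buys a much smaller invariant (match sets of pattern pieces rather than all bounded-height avoiding trees, whose number can be enormous in $d$); as you note, it also immediately yields an unambiguous grammar with nonterminals indexed by reachable states, which is all the Chomsky--Sch{\"u}tzenberger application requires. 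What the paper's construction buys is that its grammar reproduces, variable for variable, the algebraic systems of \cite{kp}, which the authors need for their later comparison; your grammar would give an equivalent but differently presented system.

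The one place your argument as written breaks is exactly the step you flagged. Your set $\mathrm{Pre}(B)$ of ancestor-closed truncations of the patterns consists only of trees containing the root of some $b_j$, and this set is not closed under passing to children: if $b=f(g(\ast,\ast),h(\ast,\ast))$, then deciding whether $b$ matches at $v$ requires knowing whether $g(\ast,\ast)$ matches at the root of the first child, but $g(\ast,\ast)$ is not an ancestor-closed truncation of $b$ and so is not recorded in $S_1$. Hence the transition $(S_1,\phi_1),\dots,(S_k,\phi_k)\mapsto(S_v,\phi_v)$ is not well defined on the state set you describe. The repair is routine: replace $\mathrm{Pre}(B)$ by the (still finite) set of all subtrees of the patterns --- the full subtree of $b_j$ rooted at each of its vertices, with the single free end adjoined --- or equivalently by all truncations of all such subtrees. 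This family is closed under taking children, the recursion ``$f(p_1,\dots,p_k)$ matches $T_v$ at the root iff each $p_i$ matches $T_i$ at the root'' then closes up, and the rest of your construction (the flag $\phi$, the DPDA simulation, termination of the $\varepsilon$-chains, acceptance at end of input with empty stack) goes through. With that correction the proof is complete.
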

In Section~\ref{sec:operads_cf}, we discuss the algorithm to construct the unambiguous grammar defining the language. 
Then the Chomsky--Sch{\"u}tzenberger enumeration theorem gives a way
to construct the system of algebraic equations from the grammar. 
We also see that the systems obtained by the methods of \cite[Section~2.2]{kp} give equivalent systems. 
 
Unfortunately, this algorithm in some cases produces different systems for two operads of the same Wilf class (see example in Section~\ref{sec:one_rel_rowland}). So, we should provide an elimination of variables. We provide a number of computer experiments in the case of operads generated by one binary operation with one monomial relation. Such operads encode the varieties of binary (non-associative) algebras which have been widely studied for decades. If we fix the degree (= the arity) of the relation, then Wilf classes for operads correspond to the Wilf classes of the binary tree  patterns with a given fixed number of leaves. The number of Wilf classes for the relation of arity $n$ is then equal to the value $A(n)$ of the sequence A161746 in Sloane's  On-Line Encyclopedia of Integer Sequences (https://oeis.org/A161746).
Such Wilf classes have been studied by Rowland~\cite{rowland2010pattern}.

We particularly improve the results by Rowland. That is, we calculate the term $A(8) = 43$ (the previous terms are calculated in~\cite{rowland2010pattern}) and give an upper bound and conjectural values for the terms $A(9)$--$A(12)$. The exact value $A(8) =43$ is based on the algebraic elimination of the variable. It is provided using the Wolfram Mathematica tool based on the Groebner elimination.  However,  we have found that the value $n=8$ is the practical limit for recent Groebner algorithms performance (at least on a standard PC), that is, for $n=9$ some cases cannot be practically treated.   We have developed a Python package that calculates the values $\dim P_t$ step-by-step using recurrent relations from the system of algebraic equations. After some value of $t$, the number of truncated generating functions looks stable. We conjecture that the number of different truncated generating functions is the number of Wilf classes. At least, these are the lower bounds for the number of Wilf classes. The resulted values are given in Section~\ref{sec:one_rel_rowland}.
  
One surprising observation by Rowland is that, for all patterns with at most 7 leaves,  the Wilf class determines the so-called enumeration generating  functions which encode the number of occurrences of the pattern inside all trees.
We show that this result is connected to some homological construction related to operads. We confirm the Rowland observation in the new cases  and discuss the connection with the Quillen homology of operads in the final Section~\ref{sec:homology}. This gives new conjectures and new algorithms for the estimation of the Betti numbers of operads. 
  
The paper is organized as follows. After a background in Section~\ref{sec:back}, we prove the general algorithmic indecidability for Wilf classes in Section~\ref{sec:indecidable}. In Section~\ref{sec:operads_cf} we show that the finitely presented monomial operads are described by unambiguous context-free languages. We also discuss the implications for their generating functions.  In  Section~\ref{sec:one_rel_rowland}, we discuss and particularly improve some of the results of Rowland on pattern avoidance in binary trees.  This gives some results and conjectures concerning one-relator binary operads. In Section~\ref{sec:homology}, we briefly discuss the homological interpretation of these results and conjectures.

\section{Background: operads, generating series, and Groebner bases }

\label{sec:back}

\subsection{Operads and generating functions}

\label{subs:oper_defs}

For the details on algebraic operads we refer the reader to the monographs~\cite{oper} and~\cite{Loday_Valet}; see also the textbook~\cite{db}.

We consider multioperator linear algebras over a field $\KK$ of zero characteristics.
A {\em variety} $W$ of $\KK$--linear algebras\footnote{We consider here varieties of algebras without constants, with the identity operator  and without other unary operations}
is the set of algebras admitting a given collection ({\em signature}) of multilinear operations 
$\Omega$ and satisfying some fixed polynomial identities. We assume that $\Omega$ is a finite union of finite sets $\Omega = \Omega_2 \cup \dots \cup \Omega_k$ where the elements $\omega$ of  $\Omega_t$ act on each algebra $A \in W$ as $t$-linear operations, $\omega:A^{\otimes t} \to A$. 
The variety is defined by two sets, the signature $\Omega$ and a set of defining identities $R$. By the linearization process, one can assume that $R$ consists of multilinear identities. 
Consider the free algebra $F^W (x)$ on a countable set of indeterminates $x= \{ x_1, x_2, \dots \}$. Let $\p_n \subset F$ be the subspace consisting of all multilinear generalized homogeneous polynomials on the variables $\{ x_1,
\dots, x_n \}$, that is, $\p_n$ is the component $F^W (x) [1, \dots, 1, 0, 0,\dots]$ with respect to the $\Z^\infty$ grading by the degrees on $x_i$.

\begin{definition}
	Given such a variety $W$,  the sequence $\p_W = \p := \{ \p_1,
	\p_2, \dots \}$ of the vector subspaces of $F^W (x)$ is called	an {\it operad}\footnote{More precisely, symmetric connected		$\KK$--linear operad with identity.}. 
\end{definition}

The $n$-th component  $\p_n$  may be identified with the set of
all derived $n$-linear operations on the algebras of $W$; in
particular,  $\p_n$ carries the natural structure of a
representation of the symmetric group $S_n$. Such a sequence $Q =
\{ Q(n) \}_{n \in \Z}$ of representations $Q(n)$ of the symmetric
groups $S_n$ is called an {$\s$--module}, so that an operad
carries a structure of $\s$-module with $\p_n = \p(n)$.  The compositions of
operations (that is, a substitution of an argument $x_i$ by a
result of another operation with a subsequent monotone re-numbering of the inputs to avoid repetitions) gives natural maps of
$S_*$-modules $\circ_i :  \p(n)\otimes \p(m) \to \p(n+m-1)$. Note
that the axiomatization of these operations gives an abstract
definition of operads, see~\cite{oper}.

The signature $\Omega$ can be considered as a sequence of subsets of $\p$ with $\Omega_n \subset \p_n$. 
Then $\Omega$ generates the operad $\p$ up to the $\s$--module structure and the  compositions $\circ_i$ so that it is called the {\em set of generators} of the operad.  

More generally, the  $\s$-module $X$ 
generated by $\Omega$ is called the (minimal) {\em module of generators} of the operad $\p$.
It can be also defined independently of $\Omega$ as $X = \p_+/(\p_+\circ \p_+)$ where $\p_+ = \p_2\cup \p_3 \cup \dots$ and $\circ$
denotes the span of all compositions of two $\s$-modules.
Then one can define a variety $W$ corresponding to a (formal) operad $\p$ by picking a set $\Omega$
of generators of $X$ to be the signature and considering all relations in $\p$ as the defining identities of the variety,
so that the variety $W$ can be recovered by $\p$ ``up to a change of variables''. One can consider the algebras from $W$ as vector spaces $V$ with the actions $ \p(n): V^{\otimes
	n} \to V$ compatible with compositions  and the $\s$-module structures, so that the algebras of $W$ are recovered by $\p$ up to  isomorphisms. 

Given an $\s$-module $X$, one can also define a {\em free operad} $\f(X)$ generated by $X$ as the span of all possible compositions of a basis of $X$ modulo the action of symmetric groups. For example, the free operad $\f(\s\Omega)$
on the free $\s$-module $\s\Omega$ corresponds to the variety of all algebras of signature $\Omega$. 

Suppose that the defining identities $R$ of the variety 
$W$ can be chosen in such a way that for each 
$$
f(x_1, \dots , x_n) = \sum_i \alpha_i f_i(x_1, \dots , x_n) \in R,
$$
where $\alpha_i \in \KK$ and $f_i$ are monomials (that is, the compositions of the operations from $\Omega$), in all monomials $f_i$ the variables  $x_1, \dots , x_n$ occur in the same relative order. A standard example is the variety of associative algebras, see below.
Then one can associate to $W$ a simpler {\em non-symmetric operad}. 

Generally, a non-symmetric operad is a union $P = P_1\cup P_2\cup \dots$
with the compositions $\circ_i$ as above but without the actions of the symmetric groups. To distinguish them, we refer to the operads defined above as {\em symmetric}. Each symmetric operad can be considered as a non-symmetric one.
To each non-symmetric operad $P$ one can assign a symmetric operad  $\p$ where $\p_n = S_n P_n$ is a free $S_n$ module generated by $P_n$. Then $\p$ is called a  {\em symmetrization}  of $P$. In particular, here $\dim \p_n = n! \dim P_n$.

An $n$-th codimension of a variety $W$ is just the dimension of the
respective operad component: $c_n(W) = \dim_k \p_n$ for $\p = \p_W$.
We consider both exponential and ordinary generating functions for this sequence:
\begin{equation}
	\label{eq::E::gen::ser}
	E_{\p} (z) := \sum_{n \ge 1} \frac{\dim \p(n)}{n!} z^n , G_{\p} (z) := \sum_{n \ge 1} {\dim \p(n)} z^n .
\end{equation}
For example, if $\p$ is  a symmetrization of a non-symmetric operad $P$ then 
$E_{\p} (z) = G_P (z)$. By a {\em generating series} of a symmetric operad $\p$ we mean the exponential generating function $\p(z) = E_{\p} (z)$.
In contrast, for a non-symmetric operad $P$ its generating series is defined as the ordinary generating function $P(z) =  G_{P} (z)$. In the case of varieties, both the  ordinary and exponential versions of the codimension series are studied. 

If the set $\Omega $ is finite then the series $\p (z)$ defines an analytic function
in the neighborhood of zero. For example, the non-symmetric operad $\mathrm{Ass}$ of associative algebras 
is the operad defined by one binary operation $m$ (multiplication) subject to the relation 
\linebreak $m(m(x_1,x_2),x_3)) = m(x_1,m(x_2,x_3))$ which is the associativity identity. Its $n$-th component consists of the only equivalence class of all arity $n$ compositions of $m$ with itself modulo the relation, so that 
$
\mathrm{Ass}(z) = G_{\mathrm{Ass}} (z) = \frac{z}{1-z}.
$
Its symmetrization is the symmetric operad $\ass$ generated by two operations $m(x_1,x_2)$ and $m'(x_1,x_2)=m(x_2,x_1)$
with the $S_2$ action $(12) m' = m$ 
subject to all the relations of the form $m(m(x_i,x_j),x_k)) = m(x_i,m(x_j,x_k))$. 
By the above, we have $E_{\ass}(z) = \ass (z) = \mathrm{Ass}(z)= G_{\mathrm{Ass}} (z)$, so that $\dim \ass_n = n!$. 

\label{sec: associativity_oper}

\subsection{Monomial bases and Groebner bases in operads}

\label{sec:groeb}

As the motivation for studying the monomial operads in the operad theory comes often via Groebner bases, we 
briefly recall here some basic facts of  the theory of Groebner bases (essentially, in non-symmetric operads). The reader who is interested in monomial operads  only can skip this subsection.

The Groebner bases in (shuffle) operads are introduced in~\cite{dk}; see also~\cite{Loday_Valet}. The Groebner bases for non-symmetric operads are discussed in~\cite{db}. 

Fix a discrete set $\Omega$ of generators of a  non-symmetric free operad.
A {\em nonsymmetric monomial} is a multiple composition of operations from $\Omega$. We refer to them simply as monomials.
Each monomial is represented by a rooted planar tree with 
internal vertices labelled by operations. We assume that the edges of the tree lead from the root to the leaves which are free edges. 

All  monomials (including the empty monomial corresponding to the identical operation) form a linear basis of the free non-symmetric operad generated by $\Omega$. 
Two  monomials are called isomorphic if they are isomorphic as labelled trees.  A monomial $P$ is {\em divisible} by a monomial $Q$ if $Q$ is isomorphic to a submonomial of $P$ where 'submonomial' means a labelled subtree.


There are families of orderings on the sets of non-symmetric monomials which are compatible with the corresponding compositions. This defines the notion of the leading term of an element of a free operad and leads to the rich Groebner bases theory. The theory includes a version of the Buchberger algorithm~\cite{dk} and even the triangle lemma~\cite{db}.
We will call the Groebner basis of the relation ideal of an operad $\p$ simply the Groebner basis of $\p$.
Whereas a general operad could have no finite Groebner basis, a number of important operads (including the operad of associative algebras and their generalized versions) admit such bases.

The first known implementation of Groebner base algorithms for an operad is the Haskell package {\sf Operads}~\cite{dv}. Its slightly improved version with some bugs fixed by A. Lando 
can be downloaded at  https://github.com/Dronte/Operads .
A new Haskell package for operadic Groebner bases  (due to Dotsenko and Heijltjes) has been recently published at http://irma.math.unistra.fr/~dotsenko/Operads.html . 

%
%

\subsection{Growth and generating series for operads with finite Groebner bases} 

\label{sec:we}


The generating series of an operad with a known Groebner basis is equal to the generating series of the corresponding {\em monomial} operad, that is, a shuffle operad or a non-symmetric operad whose relations are the leading monomials of the corresponding Groebner basis. 
The dimension of the $n$-th component of a monomial operad is equal to the number of the monomials of arity $n$ which are not divisible by the monomial relations of the operad. In this section, we consider monomial operads only.

For such an operad, the calculation of the dimensions of its components is a purely combinatorial problem of the enumeration of the labelled trees which do not contain a subtree isomorphic to a relation as a submonomial (a pattern avoidance problem for labelled trees), see~\cite{dk-pattern}. Unfortunately, this problem is too hard to be treated  in its full generality. In this section  we discuss some partial methods based on the results of~\cite{kp}.
Note that the generating series of general monomial quadratic nonsymmetric operads were first discussed  by Parker~\cite{parker1993combinatorics} in other terms.

First, let us discuss a simpler case of non-symmetric operads.

\begin{theorem}[\cite{kp}, Th.~{2.3.1}]
	\label{th-nonsym-intro}
	The ordinary generating series of a non-symmetric operad with finite Gr\"obner
	basis is an algebraic function.
\end{theorem}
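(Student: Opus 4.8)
The plan is to exhibit an explicit context-free grammar whose language is the monomial basis of $P$ (equivalently, the set of planar labelled trees avoiding a finite set of tree patterns $\mathcal{R}$, the leading terms of the Gröbner basis), and to show the generating series of the language coincides with $P(z)$; the algebraicity then follows from the Chomsky--Sch\"utzenberger enumeration theorem. First I would fix the signature $\Omega$ and the finite set $\mathcal{R}$ of forbidden submonomials, and let $N$ be the maximal arity occurring among the relations in $\mathcal{R}$. A monomial is built by grafting: a tree with root labelled $\omega \in \Omega_t$ is the composition $\omega(T_1,\dots,T_t)$ of its $t$ immediate subtrees. The key observation is that whether a tree $T$ avoids $\mathcal{R}$ is determined, recursively, by whether each immediate subtree avoids $\mathcal{R}$ together with a bounded amount of ``boundary'' information about each $T_i$ --- namely, which partial patterns of $\mathcal{R}$ can be matched at the root of $T_i$ using only the top $\le N$ levels of $T_i$. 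There are finitely many such boundary states.

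The main step is therefore to set up the nonterminals of the grammar as pairs (arity-or-root-operation data, boundary state), or more cleanly one nonterminal $A_S$ for each admissible boundary state $S$ (a set of ``prefixes'' of patterns in $\mathcal{R}$ consistent with being realized at the current root), plus a start symbol. The productions encode: to build a tree with root $\omega \in \Omega_t$ whose boundary state is $S$, choose for each child a nonterminal $A_{S_i}$ such that (i) the combination $\omega(S_1,\dots,S_t)$ does not complete any forbidden pattern of $\mathcal{R}$, and (ii) $\omega(S_1,\dots,S_t)$ truncates to exactly $S$. Leaves give the terminating productions. Because each production writes one operation symbol and then defers to child nonterminals, and because the boundary state of a tree is a deterministic function of the root operation and the children's boundary states, this grammar is unambiguous --- indeed the derivation tree is forced by the tree $T$ itself --- which is what gives the deterministic/$\N$-algebraic strengthening used in Theorem~\ref{th:operads_cf_intro}. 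One then checks that the generating function of this language, with one variable $z$ marking leaves (or arity), is exactly $G_P(z) = P(z)$, since the language is in bijection with the monomial basis graded by arity.

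To conclude algebraicity, apply Chomsky--Sch\"utzenberger: an unambiguous context-free language has an algebraic generating series, obtained by turning each production into a polynomial equation in the series of the nonterminals and eliminating all but the start symbol via resultants or Gröbner elimination. This yields the claimed algebraic function.

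I expect the main obstacle to be the bookkeeping in the definition of the boundary states and the verification that they carry enough information: one must argue that a forbidden pattern in a grafted tree $\omega(T_1,\dots,T_t)$ either lies entirely inside some $T_i$ (handled by induction) or meets the root within its top $\le N$ levels (handled by the finite boundary data), and that the truncation map ``top $N$ levels of $\omega(T_1,\dots,T_t)$'' is computable from the root label and the truncations of the $T_i$. This is the place where finiteness of the Gröbner basis is essential --- it bounds $N$ and hence the number of states --- and where a sloppy choice of state could either lose soundness (allowing a forbidden tree) or explode the construction; a clean formulation is to take the state of $T$ to be the isomorphism type of the truncation of $T$ to depth $N$ together with, for efficiency, only those truncations that are themselves $\mathcal{R}$-avoiding, but correctness does not depend on this pruning.
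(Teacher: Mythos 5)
Your proposal is correct and takes essentially the same route as the paper: pass to the monomial operad of leading terms, encode the avoiding trees as an unambiguous (deterministic) context-free language whose nonterminals are indexed by bounded-depth root truncations (the paper's $T_v$ for $v\in L_d$ play exactly the role of your ``boundary states'', and the partition into the sets $M_v$ is your ``truncates to exactly $S$'' condition), and conclude algebraicity via Chomsky--Sch\"utzenberger; the paper notes that the resulting system is equivalent to the ``stamps'' system of \cite{kp} that proves Theorem~\ref{th-nonsym-intro}. The only cosmetic difference is that you truncate to the maximal arity of the relations rather than to their maximal height $d$, which is wasteful but harmless.
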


One of the methods for finding the algebraic equation for the generating series of a  non-symmetric operad $P$ defined by a finite number of monomial relations $R$ is the following. We consider the  monomials (called stamps)  of the level  less than the maximal level of an element of $R$ which is nonzero in $P$. For each stamp $m=m_i$, we consider the generating function $y_i(z)$ of the set of all nonzero monomials which are left divisible by $m_i$ and are not left divisible by $m_t$ with $t<i$. Then the sum of all $y_i(z)$ 
is equal to $P(z)$. The divisibility relation on the set of all stamps leads to a system of $N$ equations of the form: 
$$
y_i = f_i(z, y_1, \dots, y_N)
$$
for each $y_i = y_i(z)$, where $f_i$ is a polynomial and $N$ is the number of all stamps. Note that the degree $d_i$ of the polynomial $f_i$ does not exceed the maximal arity of the generators of the operad $P$. Then the elimination of the variables leads to an algebraic equation of degree at most $d = d_1^2 \dots d_N^2$ on $P(z)$. 

A couple of similar algorithms which in some cases reduce either the number or the degrees of the equations are also discussed in~\cite{kp}.

Knowing an algebraic equation for $P(z)$, one can evaluate the asymptotics for the coefficients $\dim P_n$ by well-known methods~\cite[Theorem~D]{flj-slg-fn}.

\section{The non-existence of a general algorithm}

	\label{sec:indecidable}

\begin{proposition}
	\label{prop:non-exist}
Suppose that the basic field $\KK$ is computable.

Consider the set $H_X$ of non-symmetric quadratic operads $P$ defined by a fixed finite set $X$ of binary generators and some finite set $R$ of quadratic relations on $X$.  Then there is a natural $n$ such that if $|X| \ge n$ then

(i) the  set of Wilf classes of operads from $H_X$ is infinite;

(ii) there does not exist an algorithm which takes as an input two sets $R_1, R_2$ of relations of two operads $P_1,P_2 \in H_X$ which returns {\em TRUE} if the operads belong to the same Wilf class and {\em FALSE} if not.	
	\end{proposition}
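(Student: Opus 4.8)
The plan is to reduce a known undecidable problem about associative algebras (or, equivalently, about semigroups / Thue systems) to the Wilf-equivalence problem for quadratic non-symmetric operads. The natural candidate is the undecidability of the problem whether a finitely presented monoid (or finitely presented associative algebra with monomial, i.e.\ semigroup, relations) has a prescribed Hilbert series, or more robustly the undecidability of deciding whether two finitely presented associative algebras over a computable field have equal Hilbert series. Since a non-symmetric operad generated by binary operations contains, for any fixed internal composition pattern, a ``linear'' sub-theory that behaves like an associative algebra (monomials that are left combs, say, in which every internal vertex grafts a new generator on, say, the second input), one can encode associative-algebra relations as quadratic operadic relations and read off the Hilbert series of the algebra from a piece of the generating series $P(z)$ of the operad.

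Concretely, here is the order in which I would carry out the argument. First, fix the encoding: take $|X|$ generators, reserve one distinguished binary generator $\ast$ whose right-comb compositions $((\cdots(\ast)\ast)\cdots)\ast$ (grafting onto input $1$ each time) model words in the remaining generators, and use quadratic relations among the $\ast$-compositions to impose monomial relations of a semigroup presentation $S$. One must check two things: (a) the combinatorics of right combs avoiding the chosen patterns exactly reproduces the words of $S$ avoiding the corresponding forbidden factors, so that the number of arity-$n$ right-comb monomials equals $\dim_{\KK}(\KK S)_{n-1}$ or a similar shift; and (b) the remaining (non-right-comb) monomials of the operad, and their contribution to $P(z)$, can be controlled independently of the choice of semigroup relations --- e.g.\ by also killing, via extra quadratic relations on the other generators, everything that is not a right comb, so that $P(z)$ equals, up to a fixed explicit rational correction, the Hilbert series of $\KK S$. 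Then Wilf-equivalence of two such operads $P_1,P_2$ is equivalent to equality of the Hilbert series of the two encoded semigroup algebras.

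Second, invoke the source of undecidability. The cleanest route is: the word problem for finitely presented semigroups is undecidable (Markov--Post), and by a standard trick one turns an instance of the word problem into two monomial semigroup presentations whose associated graded algebras have equal Hilbert series iff a given pair of words is equal --- or, even more directly, one uses that there is a fixed finitely presented monoid with undecidable word problem and encodes ``$w=1$?'' as a Hilbert-series comparison. This yields part (ii): no algorithm decides Wilf-equivalence on $H_X$ once $|X|\ge n$, where $n$ is the number of generators consumed by the encoding. Part (i) follows either from the same construction (one exhibits an explicit infinite family of semigroup algebras with pairwise distinct Hilbert series, e.g.\ monomial algebras on $\{a,b\}$ with the single relation $b a^k b$ for $k=1,2,\dots$, all realizable inside $H_X$) or, more cheaply, from the observation that a non-algorithmic separation is impossible for a family with only finitely many Wilf classes, so (ii) formally implies (i) given that each $H_X$ with the discrete topology has a computable enumeration.

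The main obstacle I expect is step (b) of the encoding: ensuring that the ``garbage'' monomials of the operad --- the compositions that are not pure right combs in the distinguished generator --- contribute a term to $P(z)$ that is the \emph{same} fixed function for all presentations in the family, so that differences in $P(z)$ reflect exactly the differences in the encoded Hilbert series and nothing else. Quadratic relations are a tight budget: one cannot simply ``set to zero'' a monomial of arity $>3$ by a single quadratic relation, so one must design the generator set and the quadratic relations so that the non-right-comb part of the monomial basis is rigid. The likely fix is to add, for each non-distinguished generator and each pair, enough quadratic relations to force a free-monoid-like normal form on the whole operad with only the right-comb direction left ``open'' for the semigroup encoding; verifying that this can be done with a \emph{bounded} number $n$ of generators independent of the semigroup instance is the delicate point, and it is also exactly what pins down the value of $n$ in the statement.
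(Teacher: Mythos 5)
There is a genuine gap, and it is in the heart of your reduction. Your plan encodes a semigroup presentation by \emph{monomial} forbidden factors read off along combs, but a quadratic relation of a non-symmetric operad on binary generators lives in the arity-$3$ component of the free operad, which has dimension $2|X|^2$; a monomial quadratic relation therefore forbids a labelled subtree with exactly two internal vertices, i.e.\ a factor of length $2$ in the comb picture. Consequently, for a \emph{fixed} finite $X$ there are only finitely many quadratic monomial presentations (at most $2^{2|X|^2}$), hence finitely many operads of the kind your encoding produces --- and any function on a finite set of instances is trivially computable. So no reduction from the Markov--Post word problem via forbidden factors of unbounded length can fit inside $H_X$: your explicit family $ba^kb$ is a relation of degree $k+2$, not quadratic, and the usual quadratization trick (adding generators for prefixes) consumes an unbounded number of new generators, violating the ``fixed $X$'' hypothesis. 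The undecidability in this setting cannot be carried by the shape of forbidden monomials; it must be carried by the \emph{coefficients} of non-monomial quadratic relations, in the style of Anick's non-computability results for Hilbert series of quadratic algebras. That is exactly what the paper does: its entire proof is a citation of \cite[Theorem~3.1(ii)]{piontkovski2017growth}, which supplies a rational $Q(z)$ and a family of quadratic operads for which ``$G_P(z)=Q(z)$?'' is undecidable; fixing one operad with series $Q$ and testing Wilf equivalence against it then gives (ii).

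Two smaller points. First, your claim in step (b) that one can kill all non-comb arity-$3$ monomials by extra quadratic monomial relations is fine as far as it goes, but it does not rescue the construction for the reason above. Second, your observation that (ii) implies (i) is essentially correct and matches the paper's (terse) argument: if there were only finitely many distinct generating series in $H_X$, they would pairwise differ within some degree $N$, and since each coefficient $\dim P_n$ is computable over a computable field, ``compare the series up to degree $N$'' would be a correct algorithm for some $N$ --- existence of an algorithm does not require knowing which $N$ works. That part of your proposal can stand; the reduction itself needs to be replaced by (or rebuilt along the lines of) the coefficient-based undecidability result the paper cites.
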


\begin{proof}
	We use Part (ii) of \cite[Theorem~3.1]{piontkovski2017growth}. 
	It states that, under the conditions of Proposition,  for some rational function $Q(z)$ there does not exist an algorithm which takes as an input the list $R$ of relations of the operad $P$ such that there is a coefficient-wise inequality $G_P(z)\le Q(z)$ and returns {\em TRUE} if 
	the equality $G_P(z)= Q(z)$ holds and {\em FALSE} if not.
	It follows that the set $H_X$ contains both operads with $G_P(z) = Q(z)$
   and operads with $G_P(z) \ne Q(z)$, and there is no algorithm to separate these two subsets. If $R_1$ is an operad of the first kind and $R_2$ is an operad of the second kind, then there is no general  algorithm to check whether they belong to the same Wilf class. This proves (ii).  
   Part (i) obliviously follows from (ii).
	\end{proof}

The rest of our results are positive.

\section{Operads, tree pattern avoidance, and unambiguous context-free languages}

\label{sec:operads_cf}

In this section, we prove Theorem~\ref{th:operads_cf_intro}.


First, let us recall the notation concerning operads and trees.

We consider planar rooted trees with finite possible type of vertices. These types are the following: the root (as a vertex of a special type) and a finite set $X$ of types for the  internal vertices and the leaves such that the vertices of the same type have the same number of children. Then the set $X$ is decomposed into the disjoint union 
$X  = X_0 \cup \dots \cup X_d$ for some $d>0$, where $X_i$ is the set of the types of vertices with $i$ children (the leaves are assumed to have zero children). We fix some type of leaves $x \in X_0$ and refer to the leaves of type $x$ as {\em free ends}. Below we consider trees with no leaves but with  free ends, so that we assume that $X_0 = \{x\}$ is a singleton.
 We call such trees {\em labelled trees} or simply {\em trees} (with the set of labels $X = X_0 \cup \dots \cup X_d$). 

One can {\em graft} (compose) trees by attaching the root of one tree to a free end of another one or replacing free ends with variables. 
Let us fix a (finite) set $Y$ of labelled trees called {\em patterns}.  We say that a tree $T$ {\em avoids} the pattern set $Y$ if there is no way to obtain $T$ by a subsequent grafting of  several trees to each other and at least one of them is a pattern. In other words, a tree does {\em not} avoid the patterns if it contains a subtree isomorphic to some element of $Y$. The problem is to enumerate all the trees avoiding the patterns. 

Some cases of this problem have been discussed in a number of papers. 
The case $X=X_0\cup X_2, X_0 = \{x \} $ of binary trees   has been considered by Loday~\cite{loday2005inversion} and \cite{rowland2010pattern} (with $X_2 = \{m \}$). The ternary tree case has been discussed in~\cite{gabriel2012pattern}.
The case of quadratic patterns in binary trees has been under consideration in~\cite{parker1993combinatorics}. The general labelled trees case  has been considered in~\cite[Section~2]{kp}, see also~\cite{giraudo2020tree}.

In  Polish notation, each such tree can be encoded by a  word on the alphabet $X$.  
So, all labelled trees avoiding the pattern set $Y$ are in a one-to-one correspondence to some formal language on the alphabet $X$. We denote this language as $L(X|Y)$. The language $L(X | \emptyset)$ is referred to as {\em free} and is denoted by $F_X$. 

For example, 
let $X=X_0\cup X_2$, where $X_2 = \{m_1, \dots, m_s \}$ and $X_0 = \{x\}$ (where $x$ is a mark for free end). Suppose that 
$ Y= Y_1 \cup Y_2$, where $ Y_1 =  \{ m_i x m_j xx | i, j =1..s \}$ and $Y_2$ is some set of trees which are not divisible by the elements of $Y_1$. 
The last condition means that any right-sided branch cannot have length 2 or more.  
Then the elements of $Y_2$ should have the form $wx...x$, where $w\in X_2^*$ 
is a word on the alphabet $X_2$ and the number of $x$-s is $\length (w)+1$. 
So,
$$
L(X|Y) =\{ u x^{\length (u)+1} | u\in X_2^* \mbox{and  no subword of $u$ belongs to }Y_2 \}.
$$
(Note that a word $v$ is called a subword of a word $u$ if $u = avb$ for some words $a$ and $b$.)
This means that the words of the language $L(X|Y)$ are in a one-to-one correspondence with the words on $X_2$ which have no subwords lying in $Y_2$, that is, with the monomial basis of the monomial associative algebra $\KK\la X_2 \ra / (Y_2) $. 

On the other hand, the words of the free language $L(X|\emptyset)$ with $X$ as above
are in a one-to-one correspondence with the generalized Dyck language with $s$ pairs of parentheses.

We will prove the following. 

\begin{theorem}
	\label{th:det_CF}
	Suppose that the sets $X$ and $Y$ as above are finite. Then  the language $L(X|Y)$ is deterministic context-free.
\end{theorem}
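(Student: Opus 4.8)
The plan is to construct a deterministic pushdown automaton (DPDA) that recognizes $L(X|Y)$ by reading a word in Polish notation left-to-right and simulating the depth-first traversal of the encoded tree, while simultaneously tracking enough local context to detect any occurrence of a pattern from $Y$. The key observation is that the Polish encoding is ``self-delimiting'': when we read a symbol $\omega \in X_i$, we know it has exactly $i$ children, so we can push onto the stack a record of how many child-subtrees of the current vertex still remain to be read. A symbol $x \in X_0$ closes off a subtree and triggers popping/decrementing on the stack. This bracketing discipline is exactly what makes the free language $F_X$ (the generalized Dyck language) deterministic context-free, and the pattern-avoidance restriction will be imposed by refining the finite control, not the stack.

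The main steps I would carry out are as follows. First, I would fix the encoding precisely and describe the ``skeleton'' DPDA for $F_X$: its stack alphabet records the list of pending children counts along the current root-to-node path, it pushes when an internal vertex is read and pops when a subtree is completed, and it accepts when the stack empties after a complete word. Second, I would argue that detecting an occurrence of a pattern $T \in Y$ as a subtree rooted at the current vertex depends only on bounded information: since $Y$ is finite, let $h$ be the maximal height of a pattern in $Y$; whether the subtree currently being built matches some $T \in Y$ is determined by the labels along the last $h$ levels of the active path together with the shapes of the already-completed sibling subtrees up to depth $h$. I would encode this bounded ``window'' of local tree structure into the finite control (or, equivalently, attach a bounded tag to each stack symbol so that the relevant window is always reconstructible from the top $O(h)$ stack symbols — but keeping it in the finite state is cleaner since the window size is bounded independent of the stack height, as only the most recent levels matter). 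The automaton enters a dead (rejecting) state the moment a completed subtree is recognized as containing a pattern. Third, I would verify determinism: at each input symbol the transition is forced, there are no $\varepsilon$-moves that conflict with reading moves, and the stack operations are uniquely determined by the current symbol and top-of-stack; hence the DPDA is deterministic and $L(X|Y)$ is deterministic context-free.

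The hard part will be the second step: making precise exactly which bounded combinatorial data about the partially-read tree must be carried in the finite control so that every subtree-isomorphism to a pattern is caught at the moment that subtree is completed, and no spurious rejections occur. One has to be careful that a pattern occurrence can straddle several completed sibling subtrees glued under a common parent, so the ``window'' must remember, for each ancestor within distance $h$, partial information about which of its children-subtrees have been finished and what their top $h$ levels looked like. Bounding this information requires observing that a pattern of height $h$ touches at most a bounded number of vertices, so only finitely many distinct window-states arise; this is where the finiteness of $Y$ (and hence of $X$, via the bound $d$ on arity and $h$ on height) is essential. A secondary subtlety is confirming that this refinement is compatible with the stack discipline — that when we pop back up to a parent vertex, the finite control can correctly restore the parent's window from what it pushed — which is handled by having each PUSH also record (a bounded summary of) the parent's current window on the stack, so the window is restored on the matching POP. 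Once the bookkeeping is set up correctly, the determinism check and the conclusion are routine.
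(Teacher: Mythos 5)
Your construction is correct in substance, but it follows a genuinely different route from the paper. The paper does not build a pushdown automaton at all: it partitions $L(X|Y)$ into classes $M_t$ indexed by the trees $t$ of height at most $d$ lying in the language, builds a context-free grammar with one non-terminal $T_t$ per class (plus a start symbol), proves unambiguity by induction on rightmost derivations, proves determinism by a forced-handle (LR-style, bottom-up parsing) argument, and only at the very end invokes prefix-freeness of $L(X|\emptyset)$ to pass from a deterministic grammar to a deterministic language. Your DPDA simulating a deterministic bottom-up tree automaton on the Polish encoding is essentially the machine that the paper's grammar parses with, and your ``window'' (truncation of a subtree to height $d$) is exactly the paper's index set $L_d$ in disguise; so the underlying combinatorial insight is the same, but the formalization differs. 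What the paper's route buys is the explicit unambiguous grammar, which is what it needs downstream to apply the Chomsky--Sch\"utzenberger theorem and produce the algebraic system of equations; your route establishes the DCFL property more directly and transparently, but to recover the enumeration consequences you would still have to extract an unambiguous grammar from the DPDA. Two cautions on your write-up: the parenthetical suggesting the window could live purely in the finite control, or be reconstructed from the top $O(h)$ stack symbols, is wrong as stated --- \emph{every} ancestor of the current position, no matter how far up, still has a pending pattern check that depends on its not-yet-read later children, so the per-vertex partial window must travel on that vertex's stack symbol (which is the mechanism you correctly settle on in your final paragraph); and closing a subtree can force a cascade of pops, so you do need $\varepsilon$-moves (harmless for determinism if they never compete with reading moves), together with the prefix-freeness observation if you want acceptance by empty stack rather than by final state.
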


Given an alphabet $X$, one can associate to it a 
weight function $w: X^* \to {\Zpl}^s$ by assigning nonzero weights $w(a)
\in {\Zpl}^s$ to each letter $a\in X$ and expanding the weight to $X^*$
by the rule $w(uv) = w(u)+w(v)$.  For a language $L$ on $X$, one can consider the generating function $$
H_L(z) = \sum_{u\in L} z^{w(u)}, 
$$
where $z = (z_1, \dots, z_s )$  and  $z^{(n_1, \dots , n_s)} = z_1^{n_1} \dots z_s^{n_s} $. For example, in the case $w(x_1) = \dots =w(x_s) = 1 \in \Zpl$ the formal power series $H_L(z_1)$ is the generating function for the growth function $g_L(n) = \# \{ u\in L | \length(u) = n \}$ of the language $L$.

The famous enumeration theorem by Chomsky and Sch{\"u}tzenberger \cite{chomsky1963algebraic} 
describes growth functions of unambiguous context-free languages. Using this and the theorem by
D’Alessandro, Intrigila, and Varricchio about generating function of sparse context free languages~\cite{d2006structure}, we get

\begin{corollary}
	\label{cor:trees_intro}
	Let $H(z)$ be the generating function of the language $L(X|Y)$  above, where the sets $X$ and $Y$ are finite and $z = (z_1, \dots, z_s)$ is a vector of variables. Then the formal power series $H(z)$ satisfies a non-trivial algebraic equation with coefficients in $\Z[z]$. If, moreover, the growth of the language is sub-exponential, then the function $H(z)$ is rational.
\end{corollary}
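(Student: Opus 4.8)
The plan is to deduce the corollary from Theorem~\ref{th:det_CF} together with two classical transfer results. Since $L(X|Y)$ is deterministic context-free, it is in particular unambiguous context-free, hence generated by some unambiguous grammar $G$ (one can extract such a $G$ effectively from a deterministic pushdown automaton for $L(X|Y)$, but for the corollary only its existence is needed). Everything then reduces to pushing the one-variable enumeration theorems through the grading given by the weight function $w$.

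For the algebraicity I would apply the weighted form of the Chomsky--Sch\"utzenberger enumeration theorem~\cite{chomsky1963algebraic}. Introduce one variable $\xi_A$ for each nonterminal $A$ of $G$ and send each terminal $a\in X$ to the monomial $z^{w(a)}$; the productions of $G$ translate into a polynomial system $\xi_A=\sum_{A\to\alpha}\varphi(\alpha)$ over $\Z[z]$, where $\varphi$ maps a right-hand side to the product of the images of its symbols. As $G$ is unambiguous, $\varphi$ is a monoid morphism $X^*\to\Z[[z]]$ counting each word of $L(X|Y)$ exactly once, so the unique power-series solution of the system at the start symbol equals $H(z)$. Eliminating the $\xi_A$ by iterated resultants (or a Gr\"obner basis computation in $\Z[z][\xi_\bullet]$) yields a single nonzero polynomial $\Phi(z,T)\in\Z[z][T]$ with $\Phi(z,H(z))=0$; this is the required non-trivial algebraic equation.

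For the second statement I would use the growth dichotomy for context-free languages: the length growth function $g_L(n)$ is either polynomially bounded (the language is \emph{sparse}) or bounded below along an arithmetic progression by $c\lambda^n$ with $\lambda>1$; there is no intermediate growth. Thus sub-exponential growth forces $L(X|Y)$ to be sparse. By the structure theorem of D'Alessandro, Intrigila and Varricchio~\cite{d2006structure}, a sparse context-free language is a finite union of languages of the form $v_0u_1^*v_1\cdots u_m^*v_m$, and the sets of exponents arising from such a description are semilinear; consequently the multivariate generating function is rational --- each star contributes a factor $(1-z^{w(u_i)})^{-1}$ and one assembles finitely many such rational series. Hence $H(z)\in\Q(z_1,\dots,z_s)$.

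Because Theorem~\ref{th:det_CF} is available, the corollary carries little independent difficulty; the one point that needs care is the passage from the classical single-variable statements of Chomsky--Sch\"utzenberger and of D'Alessandro--Intrigila--Varricchio to the graded setting, i.e.\ checking that unambiguity is preserved so that the commutative image of $G$ enumerates each tree once, and that the semilinear bookkeeping in the sparse case respects the substitution $t_i\mapsto z^{w(u_i)}$. Both are routine, so I expect no real obstacle beyond Theorem~\ref{th:det_CF} itself.
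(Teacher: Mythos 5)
Your argument follows the same route as the paper, which derives the corollary directly from Theorem~\ref{th:det_CF} (deterministic, hence unambiguous, context-free) combined with the Chomsky--Sch\"utzenberger enumeration theorem for the algebraicity claim and the D'Alessandro--Intrigila--Varricchio result on sparse context-free languages for the rationality claim. You simply supply more detail (the polynomial system and elimination, and the polynomial-versus-exponential growth dichotomy reducing sub-exponential to sparse) than the paper, which states the deduction in one line; the reasoning is correct.
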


The Chomsky--Sch{\"u}tzenberger theorem gives a way to construct a system of algebraic equations for $H(z)$. Its variables are the generating functions of the sub-languages of $L(X|Y)$
which can be derived from the non-terminals of the unambiguous context-free grammar. If we use the grammar $G$ for the language 
$L(X|Y)$ (see Lemma~\ref{lem:grammars_are_unamb}), we get a system equivalent to the one described in Subsection~\ref{sec:we} and~\cite[2.2.1]{kp}. Moreover, after a triangular linear change of variables, it is also equivalent to another system of equations described in~\cite[2.2.2]{kp}. 
In the case of binary one-relator operads, these two kinds of systems were created earlier by Rowland (see~\cite{rowland2010pattern}; we discuss this case in  Section~\ref{sec:one_rel_rowland} below).


\begin{corollary}
	\label{prop:operad_cf}
		\label{cor:operad_cf}
	Let $P$ be a finitely generated non-symmetric  operad having a finite Groebner basis of relations. 
	Then its natural monomial basis forms a deterministic context-free language
	$L(X|Y)$ for some finite $X$ and $Y$.  In particular, the generating series of the operad satisfy the conclusion of Corollary~\ref{cor:trees_intro}.
\end{corollary}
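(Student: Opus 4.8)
The plan is to deduce the corollary from Theorem~\ref{th:det_CF} by a direct dictionary between operadic normal forms and tree avoidance. First I would replace $P$ by its associated monomial operad. Fix a finite generating set of $P$ and a compatible monomial order, and let $Y$ be the set of leading monomials of the given Gr\"obner basis, which is finite because the basis is. By the standard Gr\"obner-basis argument recalled in Subsection~\ref{sec:we}, the natural monomial basis of $P$ is precisely the set of monomials in normal form, i.e.\ those monomials of the free non-symmetric operad on the chosen generators that are not divisible by any element of $Y$; in particular $P$ has the same generating series as the monomial operad with relation set $Y$. So it is enough to describe this set of monomials.

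Next I would set up the alphabet and patterns of Section~\ref{sec:operads_cf}. Put $X=X_0\cup X_2\cup\dots\cup X_d$, where $X_0=\{x\}$ is the single free-end symbol and, for $i\ge 2$, $X_i$ is the (finite) set of arity-$i$ generators of $P$; there are no unary generators besides the identity, so this fits the framework, and $X$ is finite since $P$ is finitely generated. A non-symmetric monomial on these generators is exactly a labelled planar rooted tree over $X$, and ``divisible by'' coincides with ``contains a labelled subtree isomorphic to''. Hence, viewing $Y$ as a finite set of such trees, the normal monomials are in bijection, via the Polish encoding, with the words of $L(X|Y)$. Theorem~\ref{th:det_CF} then shows $L(X|Y)$ is deterministic context-free, which is the first assertion.

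For the generating series I would apply Corollary~\ref{cor:trees_intro} together with a weight function tracking arity. Since the arity of a monomial equals the number of free ends of the corresponding tree, i.e.\ the number of occurrences of $x$ in its Polish word, take the weight $w$ on $X$ with $w(x)=(1,0)$ and $w(a)=(0,1)$ for every generator letter $a$; then the series $H(z_1,z_2)$ of Corollary~\ref{cor:trees_intro} refines the enumeration by number of leaves and number of internal vertices, with only finitely many trees contributing to each power of $z_1$, and the specialization $z_2=1$ gives $P(z_1)=H(z_1,1)=\sum_n \dim P_n\, z_1^n=G_P(z_1)$. Taking for $H$ its minimal polynomial over $\Q(z_1,z_2)$ and setting $z_2=1$ leaves a non-trivial polynomial relation, so $P(z)$ is algebraic over $\Z[z]$, and it is rational whenever $L(X|Y)$ has sub-exponential growth; this is the remaining claim.

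I do not expect a real obstacle here: once Theorem~\ref{th:det_CF} is in hand the corollary is a translation. The two points that need a little care are (i) verifying that the operadic divisibility relation between monomials is literally the subtree relation defining $L(X|Y)$, so that the normal-form basis and $L(X|Y)$ are the same set and not merely equinumerous, and (ii) the weight-function bookkeeping above, in particular that the algebraic equation produced by Corollary~\ref{cor:trees_intro} survives the specialization used to pass from counting Polish words to counting monomials by arity.
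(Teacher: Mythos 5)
Your proposal is correct and follows essentially the same route the paper intends: pass to the associated monomial operad, take $Y$ to be the finite set of leading monomials of the Gr\"obner basis, identify the normal monomials with $L(X|Y)$ via the Polish encoding, and invoke Theorem~\ref{th:det_CF} together with Corollary~\ref{cor:trees_intro} (the paper treats the corollary as immediate and gives no separate proof). Your extra care with the two-variable weight and the specialization $z_2=1$ is a reasonable way to respect the paper's requirement that letter weights be nonzero while still extracting the arity-graded series.
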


%
%

Given a tree $t\in F_X$, its {\em height} $\height  t$ is the maximal number of internal nodes lying on the same branch. Let $d = \max \{ \height t | t\in Y\}$ be the maximal pattern height. In the case of empty $Y$, we put $d=0$.

We say that a tree $t$ is a {\em rooted subtree} of a tree $v$ (notation: $t\reqtree v$) if $v$ can be obtained from $t$ by grafting some other trees onto it.  If, in addition, $t\ne v$, we write $r \rtree t$. 

In the notation of Theorem~\ref{th:det_CF}, let us denote $L = L(X|Y)$ and $L' = L'(X|Y)$. For $n\ge 0$, let $L_n$ (resp., $L'_n$) denote the set of all trees in $L$ (resp., $L'$) having a height of at most $n$.

Let $t \in L$. Let $\widehat {M_t} = \{ v \in L | t \reqtree v\}$ denotes the set of all the trees of $L$ obtained from $t$ by grafting other trees onto it. Put 
$$
M_t = \widehat {M_t} \setminus  \bigcup_{s\in L_d: t\rtree s} \widehat {M_{s}}.
$$

\begin{lemma}
	The  language $L = L(X|Y)$ is the disjoint union of the subsets $M_t$ with $t\in L_d$.
\end{lemma}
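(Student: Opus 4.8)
The plan is to verify the two defining properties of a partition directly from the definition of the sets $M_t$: first, that the $M_t$ for $t \in L_d$ are pairwise disjoint, and second, that their union is all of $L$. Throughout, the key structural fact I would exploit is that every tree $v \in L$ has a well-defined \emph{height-$d$ truncation} $\tau_d(v)$, namely the rooted subtree of $v$ obtained by cutting every branch at depth $d$ (keeping only the first $d$ internal nodes along each branch and replacing the rest by a free end). Since $v$ avoids $Y$ and every pattern in $Y$ has height at most $d$, the truncation $\tau_d(v)$ is again a tree avoiding $Y$ (it is a rooted subtree of $v$, hence contains no forbidden subtree), so $\tau_d(v) \in L_d$; and by construction $\tau_d(v) \reqtree v$, so $v \in \widehat{M_{\tau_d(v)}}$.

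For disjointness, suppose $v \in M_t \cap M_{t'}$ with $t, t' \in L_d$. From $v \in \widehat{M_t}$ we have $t \reqtree v$, and since $t$ has height $\le d$ and $v$ restricted to depth $d$ equals $\tau_d(v)$, grafting can only occur at the free ends of $t$ at depth $\le d$; this forces $t \reqtree \tau_d(v)$. If $t \rtree \tau_d(v)$ held, then $\tau_d(v) \in L_d$ would be a tree with $t \rtree \tau_d(v) \reqtree v$, placing $v$ in $\widehat{M_{\tau_d(v)}}$ and hence excluding $v$ from $M_t$, a contradiction. So $t = \tau_d(v)$, and symmetrically $t' = \tau_d(v)$, whence $t = t'$. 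For covering, given any $v \in L$, set $t := \tau_d(v) \in L_d$. Then $v \in \widehat{M_t}$ as noted above. Moreover, for any $s \in L_d$ with $t \rtree s$, we cannot have $s \reqtree v$: since $\height s \le d$ and both $s$ and $t = \tau_d(v)$ sit inside the depth-$\le d$ part of $v$, any $s \reqtree v$ would satisfy $s \reqtree \tau_d(v) = t$, contradicting $t \rtree s$. Hence $v \notin \widehat{M_s}$ for all such $s$, so $v \in M_t$. This shows $L = \bigsqcup_{t \in L_d} M_t$.

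The main obstacle I anticipate is making the ``truncation'' argument fully rigorous — in particular, the claim that if $t \reqtree v$ with $\height t \le d$ then $t \reqtree \tau_d(v)$, and the dual claim used in the covering step. These hinge on a careful bookkeeping of \emph{where} grafting can attach relative to depth: a rooted subtree relation $t \reqtree v$ means $v$ is built from $t$ by grafting trees onto the free ends of $t$, and one must argue that the portion of $v$ lying at depth $\le d$ is determined entirely by the portion of $t$ lying at depth $\le d$ together with (possibly) graftings that begin at depth $\le d$ but whose \emph{first $d$ levels} are already recorded in $\tau_d(v)$. Formalizing this cleanly — perhaps by induction on $\height v$ or by giving an explicit description of $\tau_d$ as a retraction on the poset $(F_X, \reqtree)$ compatible with avoidance — is the technical heart of the lemma; once it is in place, both disjointness and covering follow by the short arguments above.
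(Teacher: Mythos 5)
Your proof is correct, but it is organized differently from the paper's. The paper handles the two halves separately: for the covering it observes that $\bigcup_t M_t = \bigcup_t \widehat{M_t} \supseteq \widehat{M_1} = L$ (implicitly a maximality argument over the finite poset $\{s \in L_d : s \reqtree v\}$), and for disjointness it takes two competing stamps $s,t \reqtree p$ and forms their \emph{join} $r$ (the minimal rooted subtree of $p$ containing both), noting that the internal nodes of $r$ are the union of those of $s$ and $t$, so $\height r \le d$ and $r \in L_d$, which forces $s = r = t$. You instead construct, for each $v \in L$, the canonical depth-$d$ truncation $\tau_d(v)$, i.e.\ the \emph{maximum} element of $L_d$ below $v$, and show in effect that $M_t = \{v \in L : \tau_d(v) = t\}$; both disjointness and covering then fall out at once. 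The two approaches rest on the same underlying facts --- that a rooted subtree of a $Y$-avoiding tree is $Y$-avoiding, and that the relevant lattice operation (join of two height-$\le d$ rooted subtrees for the paper, maximum height-$\le d$ rooted subtree for you) stays inside $L_d$ --- but yours buys a slightly stronger and more explicit statement (an explicit bijective labelling of the blocks of the partition), at the cost of the monotonicity claim you rightly flag, namely that $t \reqtree v$ and $\height t \le d$ imply $t \reqtree \tau_d(v)$. That claim is routine: the $j$-th internal node on any branch of $t$ embeds as the $j$-th internal node on the corresponding branch of $v$ with $j \le d$, hence survives the truncation; so the gap you identify is real but easily closed, and your argument is complete once it is written out.
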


\begin{proof}
	Obviously, the union of all such sets $M_t$ is the same as the union of all sets $\widehat {M_t}$, where $t$ runs $L_d$. Since $\widehat {M_1} =L$ (where $1$ is the tree consisting of the root and single free end), this union is equal to $L$. Let us prove that the union is disjoint. 
	
	Ad absurdum, suppose that for some different $s, t \in L_d$ there exists a tree $p \in M_s \cap M_t$.
	Since $p\in \widehat {M_{s}} \cap \widehat {M_t}$, the both trees $s$ and $t$ are rooted subtrees of $p$. So, there is  the minimal 	(w.~r.~t. the relation "$\rtree $") rooted subtree $r$ of $p$ such that both $s$ and $t$ are rooted subtrees of $r$.  The set of internal nodes of $r$ (as a subgraph of $p$) is the union of the sets of internal nodes of $s$ and $t$, so that $\height r \le \max\{\height s, \height t\} \le d$. Since $r$ is a rooted subtree of $p\in L$, it follows that $r\in L$, so that $r\in L_d$. Then $p\in \widehat M_r$. If $t\ne p$ (or, respectively, $s\ne p$), then 
	$$
	p\in  M_t \subset \widehat {M_t} \setminus \widehat {M_r}
	$$
	(resp., $p\in \widehat {M_s} \setminus \widehat {M_r}$), in contradiction to the condition $p\in \widehat M_r$. So, $p=s=t$: this contradicts the choice of $s$ and $t$.
\end{proof}

Now, let us define a context-free grammar $G$ for the languages $L$ as follows. Let the sets of terminal symbols be $X$,
and let $V = \{ T_v |v\in L_d \}\cup \{S \}$ be the set of non-terminal symbols. The sets of rules of these grammars are the following.
First,  for each $v\in L_d$, let $m=m_v$ be the label of the root vertex of $v$, and let 
$k$ be the number of children of this vertex (so that $m\in X_k$). If $k\ge 1$ then the rule 
$$
T_v \to m T_{v_1} \dots T_{v_k}
$$
exists  for some $v_1, \dots, v_k \in L_d$ iff $m v_1 \dots v_k\in M_v$.
Next,  there is a rule 
$$
T_x \to x.
$$ 
The initial rules are 
$$
S\to T_v
$$ 
for all $v\in L_d$.

\begin{lemma}
	\label{lem:grammars_are_unamb}	
	The  grammar  $G$ is unambiguous  
	and  generates the languages $L= L(X|Y)$.
\end{lemma}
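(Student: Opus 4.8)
The plan is to verify two things separately: that the grammar $G$ generates exactly $L = L(X|Y)$, and that every word of $L$ has a unique derivation tree. For the first claim, I would argue by induction on the height of trees. The key observation is that the non-terminal $T_v$ should be shown to generate precisely the set $M_v$ (viewed as a set of words in Polish notation via the encoding), and then the initial rules $S \to T_v$ together with the previous Lemma (which says $L$ is the disjoint union of the $M_v$ over $v \in L_d$) immediately give $L(G) = \bigcup_{v \in L_d} M_v = L$. To prove $L(T_v) = M_v$: if the root of $v$ has $k \ge 1$ children with label $m \in X_k$, a one-step derivation $T_v \to m\,T_{v_1}\cdots T_{v_k}$ exists exactly when $m v_1 \cdots v_k \in M_v$, and then by induction each $T_{v_i}$ generates $M_{v_i}$; grafting the trees generated at the $v_i$ onto the shape $m v_1 \cdots v_k$ yields exactly the trees of $L$ lying above $v$ but not above any strictly larger rooted subtree of height $\le d$, which is the definition of $M_v$. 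The base case $T_x \to x$ gives $M_x = \{x\}$. One has to check carefully that the recursion ``bottoms out'' correctly — i.e.\ that the grafting description of $M_v$ genuinely decomposes along the children of the root into the $M_{v_i}$ — and that membership $m v_1 \cdots v_k \in M_v$ is well-defined, but this is essentially unwinding the definition of $M_v$ and the fact that a tree of $L$ avoids $Y$ iff all of its rooted subtrees of height $\le d$ lie in $L_d$.

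For unambiguity, I would show that the derivation tree is forced at every node. Given a word $u \in L$ (equivalently a tree $T \in L$), there is a unique $v \in L_d$ with $T \in M_v$ (disjointness from the previous Lemma), which forces the first production $S \to T_v$. Having fixed $T_v$, the shape of $T$ at the root — its root label $m \in X_k$ and the decomposition of $T$ into the root plus $k$ maximal subtrees hanging below it — is intrinsic to $T$, so the production $T_v \to m\,T_{v_1}\cdots T_{v_k}$ to be applied is determined: $v_i$ must be the unique element of $L_d$ whose associated set $M_{v_i}$ contains the $i$-th subtree of $T$. Recursing down the tree, the derivation is completely determined by $T$, so it is unique. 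Equivalently, one can invoke the fact (essentially Corollary~\ref{prop:operad_cf}'s companion statement) that $G$ is an $LL$-type / deterministic grammar: the leftmost symbol of the remaining input determines which production to expand, because the Polish-notation prefix $m$ identifies the root label and the $M_v$ are pairwise disjoint.

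The main obstacle I expect is bookkeeping rather than conceptual: one must be precise about the correspondence between labelled trees and their Polish-notation words, about what ``grafting onto $v$'' means at the level of the individual subtrees below the root of $v$ (so that the set $M_v$ really factors as a ``shape'' $m v_1 \cdots v_k$ followed by independent choices from $M_{v_1}, \dots, M_{v_k}$), and about the fact that the $v_i$ appearing in a valid production again have height $\le d$ — which holds because a rooted subtree of a height-$\le d$ tree has height $\le d$, so the recursion stays inside the finite non-terminal set $V$. Once the statement ``$T_v$ derives exactly $M_v$'' is set up with the right induction hypothesis, both the generation claim and the uniqueness claim fall out together, since the inductive step simultaneously exhibits the derivation and shows each choice in it was forced.
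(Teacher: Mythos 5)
Your proposal is correct and follows essentially the same route as the paper: both arguments use the disjointness of the decomposition $L=\bigsqcup_{v\in L_d} M_v$ to force the initial production $S\to T_v$, and then the uniqueness of the Polish-notation decomposition of a word into its root label and branches, together with induction on the subtrees, to force every subsequent production. The paper phrases this as existence and uniqueness of a rightmost derivation by induction on word length rather than your ``$T_v$ derives exactly $M_v$'' formulation, but the content (including the implicit structural fact that $M_v$ factors along the children of the root, which you rightly flag as the main bookkeeping point) is the same.
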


\begin{proof}
	
	Let us prove that for each word $w$ from $L$ there exists a unique rightmost derivation in the grammar $G$. 
	
	Let us first show that for each such word $w \in M_v$ there is a rightmost derivation beginning with $S\to T_v$.
	Indeed, if $w =x$, then there is  unique derivation $S\to T_x\to x$. Otherwise, there are unique $v, v_1, \dots , v_k$, and $m$ such that 
	$w = m v_1 \dots v_k\in M_v$. By the induction argument, for each $v_i$ there is a unique rightmost derivation. It  has the form 
	$$S \to T_{h_i}\to \dots \to   v_i , $$
	where $v_i \in M_{h_i}$. 
	Then there is a rightmost derivation 
	\begin{multline}
		\label{eq:der_w}
		S \to T_v \to m T_{h_1} \dots T_{h_{k-1}} T_{h_k} \to  \dots
		m T_{h_1} \dots T_{h_{k-1}} v_k \\
		\to \dots \to
		m T_{h_1}  \dots {v_{k-1}} v_k  
		\\
		\to \dots \to m v_1 \dots v_k = w.
	\end{multline}

	Let us show that this derivation is unique.  By the induction argument, it is sufficient to show that the first step is unique. Moreover, we can assume by induction that the unique rightmost derivation  for any word $b$ of length less than the one $w$  begins with $S\to T_{c}$ where $b\in M_c$.
	
	So, assume that there is  a rightmost derivation $S \sto w$ with the initial step $S\to T_{v'}$; we need to show that $v=v'$.
	The next step of the derivation must be of  the form 
	$T_{v'} \to m' T_{v'_1} \dots T_{v'_{k'}}$, where 
	$m' {v'_1} \dots {v'_{k'}} \in M_{v'}$. It follows that 
	$w = m' w'_1 \dots w'_{k'}$, where for each $i$ there exists a (rightmost) derivation $T_{v'_i} \sto {w'_i}$. Here 
	$m'$ corresponds to the mark of the root of the tree $w$ and 
	the subwords $w'_1 , \dots , w'_{k'}$ correspond to its branches. It follows that  $m'=m$, $k'=k$, and $w'_i = w_i$ for each $i =1, \dots ,k$. Moreover, since for each $w_i$ there are two rightmost derivations 
	$$
	S \to T_{v'_i} \sto {w'_i} = w_i
	$$
	and 
	$$
	S \to T_{v_i} \sto w_i,
	$$
	we use the induction assumption to conclude that $v'_i = v_i$. Thus, the word $ m {v_1} \dots {v_{k}} = m' {v'_1} \dots {v'_{k'}} $ belongs to both $M_v$ and $M_{v'}$, so that $v=v'$. 
\end{proof}

\begin{lemma}
	\label{lem:grammars_are_determ}
	The grammar $G$ is deterministic.
\end{lemma}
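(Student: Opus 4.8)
The plan is to show that the grammar $G$ can be equipped with a deterministic pushdown automaton (DPDA) that recognizes $L$, which amounts to checking that parsing decisions can be made with at most one symbol of lookahead — in other words, that $G$ is an $LL(1)$ (or at least $LR(1)$) grammar in a suitable normalized form. The key structural fact making this possible is that the trees are written in Polish (prefix) notation, so the very first symbol of any (sub)word $w$ that derives from a non-terminal $T_v$ is exactly the label $m=m_v$ of the root of $v$; thus the root label of the pattern-bounded rooted subtree $v$ associated to $w$ is determined by $\mathrm{first}(w)$, and more is true: since $w\in M_v$ with $v\in L_d$ of bounded height $d$, the finitely many initial symbols of $w$ already determine $v$ itself (scanning the first few symbols, one reconstructs the top part of the tree up to height $d$, hence $v$). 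This is precisely the content one needs to turn the nondeterministic choice ``$S\to T_v$'' and ``$T_v\to mT_{v_1}\dots T_{v_k}$'' into deterministic ones.

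First I would make the lookahead bound explicit: for any $w\in L$, reading a prefix of length at most some constant $N=N(X,Y)$ (for instance $N$ bounded in terms of $d$ and $\max_i i$ over $X_i\ne\emptyset$) suffices to identify the unique $v\in L_d$ with $w\in M_v$, and likewise to identify which production $T_v\to mT_{v_1}\dots T_{v_k}$ applies. Concretely, one parses the prefix as the beginning of a planar labelled tree in prefix order; because every $v_i$ appearing in a rule again lies in $L_d$ and has height $\le d$, the ``spine'' that needs to be seen to commit to the rule has bounded size. Second, I would assemble a DPDA $\mathcal{A}$ directly from $G$: its stack alphabet is $V\cup X$ together with a bounded buffer recording the last $\le N$ input symbols; on reading input it simulates the leftmost derivation of $G$, using the buffered lookahead to resolve, at each step, the unique applicable rule (existence and uniqueness of that rule being exactly Lemma~\ref{lem:grammars_are_unamb} together with the identification-from-prefix statement above). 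Since at each configuration the next move of $\mathcal{A}$ is a function of (top of stack, current state, bounded lookahead buffer) with no $\varepsilon$-branching into distinct alternatives, $\mathcal{A}$ is deterministic, and by Lemma~\ref{lem:grammars_are_unamb} it accepts exactly $L$. Hence $L$ is deterministic context-free, i.e.\ $G$ is deterministic in the sense used here.

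Alternatively, and perhaps more cleanly for write-up, I would phrase it as: $G$ is an $LL(1)$ grammar. For this I must check the standard $LL(1)$ condition — for each non-terminal, the $\mathrm{first}$-sets of the right-hand sides of its distinct productions are pairwise disjoint (nullability being a non-issue here since the only terminal-producing rule is $T_x\to x$ and no right-hand side derives the empty word). For $S$, the productions $S\to T_v$ have $\mathrm{first}(T_v)=\{m_v\}$, and two distinct $v,v'\in L_d$ with the same root label need not have disjoint first symbols; this is the one genuine subtlety. I would resolve it by first performing left-factoring / refinement of the grammar: replace the flat set $\{S\to T_v\}$ by a finite decision tree of non-terminals $S_w$ indexed by bounded prefixes $w$, where $S_w$ consults the next input symbol to descend, and an analogous refinement of the $T_v$ productions; the height bound $d$ guarantees this refinement is finite. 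After this normalization the $LL(1)$ conditions hold by construction, and an $LL(1)$ grammar yields a deterministic pushdown automaton by the classical construction.

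The main obstacle is exactly this last point: the naive grammar $G$ is unambiguous but not literally $LL(1)$ because several non-terminals $T_v$ can share an initial terminal symbol, so the determinism is not syntactic ``on the nose'' and requires the left-factoring/refinement argument — equivalently, a careful proof that a bounded lookahead (not just one symbol) determines all parsing choices and then an appeal to the fact that bounded lookahead can be absorbed into finite control. Verifying that the refinement terminates and still generates $L$ is routine given $|L_d|<\infty$, but it is the step where the height bound $d$ and finiteness of $X,Y$ are really used, so I would state it as a lemma and prove it by induction on the prefix length, matching the inductive structure already used in the proof of Lemma~\ref{lem:grammars_are_unamb}.
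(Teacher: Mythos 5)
Your argument is top-down: you want to run a predictive ($LL$-style) parser and resolve every production choice with a bounded lookahead, and everything rests on the claim that for $w\in M_v$ ``the finitely many initial symbols of $w$ already determine $v$.'' That claim is false, and it is where the proof breaks. In Polish notation the word of a tree $m\,v_1\cdots v_k$ is the root label followed by the \emph{complete} word of the first subtree, then the complete word of the second, and so on; a bounded-length prefix of $w$ therefore reveals only an initial piece of the depth-first traversal (essentially part of the leftmost branch), not ``the top part of the tree up to height $d$.'' The rooted subtree $v\in L_d$ with $w\in M_v$ depends on \emph{all} children of the root down to height $d$, and in prefix order the second child's first symbol only appears after the entire first subtree has been emitted. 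Concretely, take $v=m(s,x)$ and $v'=m(s,m(x,x))$ in $L_d$ for a suitable $s$ of height $d-1$ all of whose free ends sit at depth $d$: words of $M_v$ have the form $m\,u\,x$ and words of $M_{v'}$ the form $m\,u\,mxx$, where $u$ ranges over arbitrarily long admissible extensions of $s$. These two families share common prefixes of unbounded length, so no constant $N=N(X,Y)$ of lookahead can separate $S\to T_v$ from $S\to T_{v'}$, and for the same reason your left-factoring step (a finite decision tree of non-terminals indexed by \emph{bounded} prefixes) cannot terminate in a grammar satisfying the $LL(1)$ condition. The height bound $d$ bounds the height of $v$ but not the length of input over which the decision must be deferred; the grammar $G$ is not $LL(k)$ for any $k$, and whether the \emph{language} $L(X|Y)$ admits some $LL$ grammar is a strictly stronger assertion that your construction does not establish.

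The paper's proof avoids this by going bottom-up rather than top-down: it works with rightmost derivations and shows that every sentential form occurring in such a derivation has a \emph{forced handle}, i.e.\ the rightmost reduction step is uniquely determined by the sentential form (an $LR(0)$-type condition), arguing by induction along the decomposition of the derivation into the sub-derivations of $v_1,\dots,v_k$ and a case analysis on the shape of the handle ($T_v$, $\mu T_{\alpha_1}\cdots T_{\alpha_s}$, or $x$). Combined with Lemma~\ref{lem:grammars_are_unamb} and the fact that $L(X|Y)$ is prefix-free (being contained in the Polish-notation language $L(X|\emptyset)$), this yields determinism. The bottom-up direction is exactly what dissolves the obstruction above: a subtree is reduced to a non-terminal $T_{h_i}$ only after its entire word has been read, so the commitment to a particular $v$ is made once all the information identifying it is available. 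To repair your write-up you would need to replace the bounded-lookahead/$LL(1)$ claim by a handle-uniqueness (or $LR$) argument of this kind.
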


\begin{proof}
	We have to show that each word $a$ appearing in the derivation process for some $w\in L$ (respectively, $w\in L'$) has a forced handle. If $w\in X$, then both words appearing in the derivation $S\to T_w\to w$ obviously coincide with their one-symbol handles, so that the handles are forced. 
	
	It remains to show that each word $a$ appearing in the 
	derivation~(\ref{eq:der_w}) 
	for a word $w$ of length at least 2 
	has a forced handle.  Let $r$ be the maximal prefix of $a$ which ends with a non-terminal, so that $a=rs$
	where $r\in {X\cup V}^*V \cup \{ 1 \}$ 
	and $s \in X^*$ consists of variables.
	
	On the other side, the word $a$ has the form $a = uhq$, where $u,h \in (X\cup V)^*$, $h$
	is the handle, and $q\in X^*$ is a word consisting of terminal symbols. The handle $h$ of $a$ is  the right-hand-side of some grammar rule, so that it has either the form $T_v$ for some $v\in L_d$, or the form $\mu T_{\alpha_1} \dots T_{\alpha_s}$ for some $\mu,{\alpha_1}, \dots , {\alpha_s}$, or is equal to $x$.

	Let us compare both decompositions $rs$ and $ahq$ of the same word $u$.
	Obviously, if $h=T_v$ for some $v\ne x$, then $a=T_v$ and this is the first step $S\to T_v$ of the derivation. Still, if $a = T_v$ for some $v\in L$, then $h=a$ is forced, because no word but $a$ derived from $S$ can begin with $T_v$.  We will not consider this case further.  
	Next, if the handle $h$ is of the form $\mu T_{\alpha_1} \dots T_{\alpha_s}$, then it consists of the rightmost sequence of non-terminals in $uh$ accompanied with the preceding terminal symbol, so that 
	$r=uh$ and $s=q$. Finally, if $h = x$, then $s = s_1 x q$, where $s_1$ does not contain $x$, so that $s_1\in (X\setminus X_0)^*$.

	Let $t \in [1,k]$ be the maximal number such that $a$ appears before the $(k+1)$-symbol initial segment of the word $m T_{h_1} \dots T_{h_k}$ in~(\ref{eq:der_w}) is changed, so that the derivation of $w$ splits as 
	\begin{multline*}
		S \to T_v \to m T_{h_1} \dots  T_{h_k} \to  \dots
		m T_{h_1} \dots T_{h_{k-1}} v_k \\
		\to \dots \to 
		m T_{h_1} \dots  T_{h_{t-1}} T_{h_{t}} v_{t+1} \dots v_k \\ 
		\to \dots \to a
		\to \dots m T_{h_1} \dots  T_{h_{t-1}} v_{{t}} v_{t+1} \dots v_k 
		\\
		\to \dots  \to m v_1 \dots v_k = w.
	\end{multline*}
	
	Then $a$ has the form $a= m T_{h_1} \dots  T_{h_{t-1}} a'$ with $a' = a'' v_{t+1} \dots v_k$, where 
	the word $a''$ appears in the derivation 
	\begin{equation}
		\label{eq:derivation_vt}
		S\to T_{h_{t}}\to \dots a'' \to \dots \to v_t
	\end{equation}
	of the word $v_t$. 
	
	Ad absurdum, suppose that the rightmost derivation of some word $\tilde w \in L$ (or $\tilde w \in L'$, in the case of grammar $G'$) contains a word $\tilde a = uh \tilde q$  (where $\tilde q \in X^*$) which has another handle $\tilde h$. If both handles $h$ and $\tilde h$ are of the form $\mu T_{\alpha_1} \dots T_{\alpha_s}$, then each of them is uniquely defined as the rightmost subword of $u$ having this form, so that $\tilde h=h$, a contradiction. Now, suppose that both handles  $h$ and $\tilde h$ are single symbols $x$. Then  the handles are uniquely defined as the leftmost occurrences $x$  in the words $uhq$ and, respectively, $uh \tilde q$.
	Then we again get $\tilde h=h$, a contradiction.
	
	Now, it remains to consider the case when both handles are of different kinds. We can assume that $h = \mu T_{\alpha_1} \dots T_{\alpha_s}$ and $\tilde h =x$, so that $a = uhs$ and $\tilde a = uh\tilde s_1 \tilde h \tilde q$. 
	Then $h$ is the handle of the word $a' = u'hs'$ 
	appearing in the derivation~(\ref{eq:derivation_vt})  of the word $v_t$,
	where $s = s'v_{t+1} \dots v_k$.
	On the other hand, since the initial symbol of the word $\tilde w$ is $m$, then $\tilde w = m \tilde v_1 \dots \tilde v_k$ for some $\tilde v_1, \dots,  \tilde v_k$. Then the derivation of $\tilde w $ 
	has the following form similar to (\ref{eq:der_w})
	\begin{multline*}
		S \to T_{\tilde v} \to m T_{\tilde h_1} \dots  T_{\tilde h_k} \to  \dots \\
		\to
		m T_{\tilde h_1} \dots T_{\tilde h_{k-1}} \tilde v_k 
		\to \dots \to 
		m T_{\tilde h_1} \dots   T_{\tilde h_{t-1}} T_{\tilde h_{t}} \tilde v_{t+1}\dots \tilde v_k  \\
		\to  \dots \to \tilde a  \to \dots  \to
		m T_{\tilde h_1} \dots   T_{\tilde h_{t-1}} \tilde v_{{t}} \tilde v_{t+1}\dots \tilde v_k \\
		\to \dots 
		\to m \tilde v_1 \dots \tilde v_k = \tilde w.
	\end{multline*}
	Here $\tilde h_1 = h_1, \dots, \tilde h_{t-1} = h_{t-1}$, because the initial segments $uh$ of the words $a$ and $\tilde a$ coincide. Then $\tilde h$ is the handle of the word $\tilde a'' = u'h \tilde s'$ in the induced derivation
	\begin{equation*}
		S\to T_{\tilde h_{t}}\to \dots \to  \tilde a'' \to \dots \to \tilde v_t
	\end{equation*}
	of the word $\tilde v_t$. By induction, the handle $h $ is forced in the word $a'' = u'hs'$. This means that $h$ must coincide with any handle in a word of the form $u'h g$ for any $g\in X^*$. So, the handle $\tilde h$ of the word $\tilde a'' = u'h \tilde s' $
	must coincide with $h$, a contradiction.  
\end{proof}

\begin{proof}[Proof of Theorem~\ref{th:det_CF}]
	It  follows from Lemmata~\ref{lem:grammars_are_unamb} and \ref{lem:grammars_are_determ} that the language $L(X|Y)$ is defined by deterministic context-free grammars. Since this language is a  subset of the prefix-free language $L(X|\emptyset)$, it is prefix-free. So, the language is deterministic context-free. 
\end{proof}

\section{One-relator binary operads and tree pattern avoidance}
\label{sec:one_rel_rowland}


Consider an  operad $P$ with a single  binary generator. Then, all internal vertices of the monomials of the operad have the same label (corresponding to the  generator), so, these monomials can be enumerated by planar binary trees with no labels. 
Let the operad have a single monomial relation $t$.
Then the monomial linear basis of the operad is in natural bijection with the 
binary trees without labels avoiding 
pattern $t$.

For example, the single relation $m(m(x_i,x_j),x_k)) - m(x_i,m(x_j,x_k))$ of the non-symmetric associativity operad $\mathrm{Ass}$ (see Subsection~\ref{sec: associativity_oper}) looks as a difference of two trees, see Figure~\ref{fig:assocrel}. 
\begin{figure}[ht]
\includegraphics[scale=0.06]{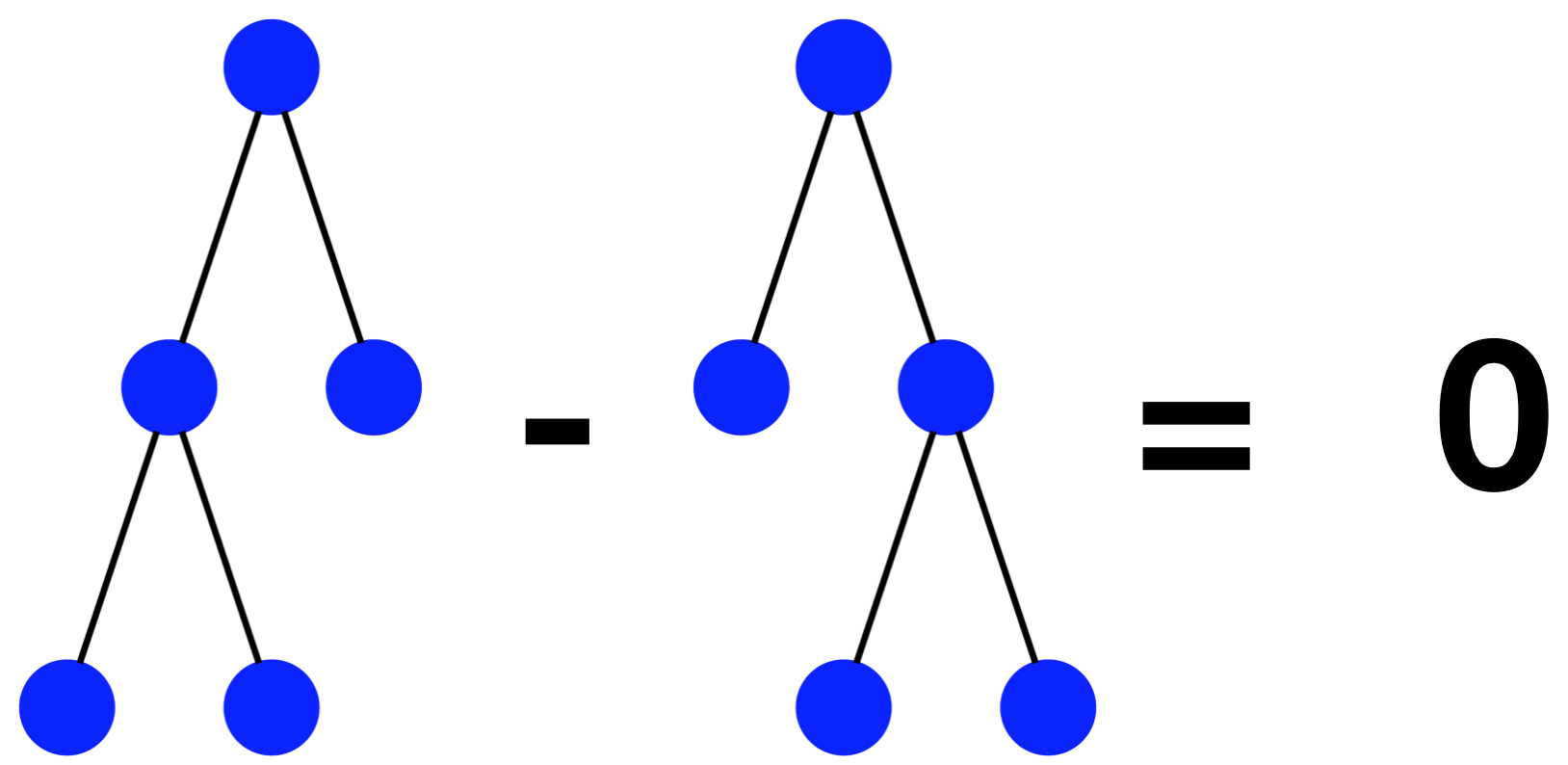}
\caption{The associativity relation}
\label{fig:assocrel}
\end{figure}
 The leading monomial of the relation is the monomial $t$ corresponding to the tree in Figure~\ref{fig:simple_monom}.  
\begin{figure}[ht]
\includegraphics[scale=0.04]{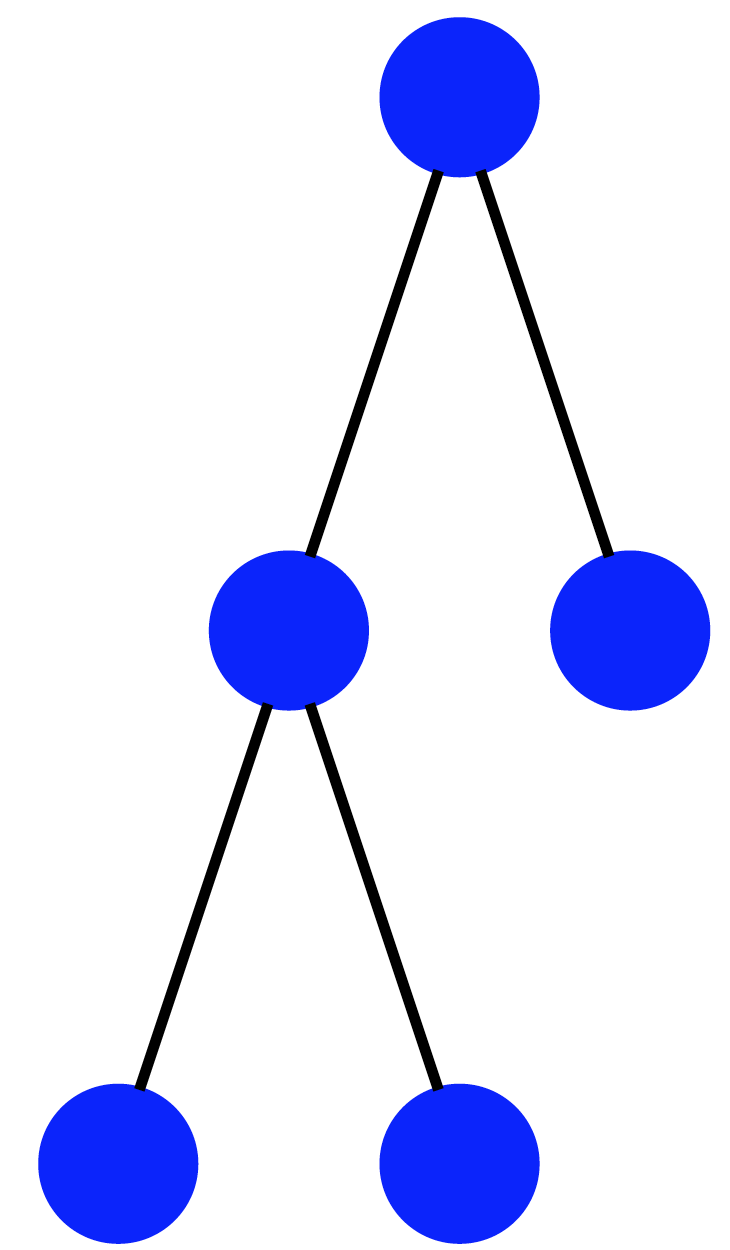}
\caption{The leading monomial ($t$)}
\label{fig:simple_monom}
\end{figure}
 Then a linear basis of each component $\mathrm{Ass}_n$ of the operad consists of the
monomials avoiding $t$, that is, the ones with the trees in Figure~\ref{fig:ass_n}.
\begin{figure}[ht]
\includegraphics[scale=0.07]{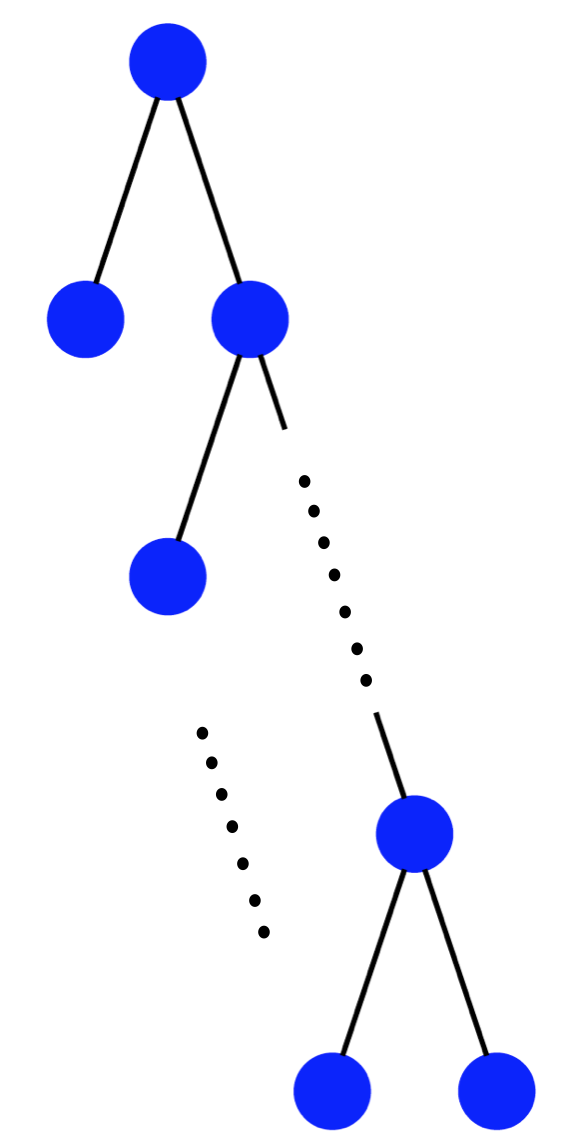}
\caption{A monomial avoiding $t$}
\label{fig:ass_n}
\end{figure}

The methods of enumeration of such trees and the Wilf classes has been introduced by Rowland~\cite{rowland2010pattern}; here we recall some of his results. 
Let $a_n$ denote the number of words with $n$ vertices (both internal and external) which avoids 
pattern $t$. The generating function 
$$
Av_t(x) = \sum_{n\ge 1} a_n x^n
$$
is called the {\em avoidance } function of $t$. Two patterns are Wilf equivalent (or avoidance equivalent, in terms of~\cite{rowland2010pattern}) if their avoidance functions are equal. Obviously, Wilf equivalent patterns have the same number of leaves. 

Note that the binary tree with $n$ vertices has $(n-1)/2$ leaves. 
Since the $k$-th component $P_k$ of the operad is spanned by 
trees with $k$ leaves, the generation function of the operad is 
$$
G_P(z) = \sum_n a_n z^{(n-1)/2} = Av_t(\sqrt{z}) /\sqrt{z}. 
$$
The Wilf classes of these operads correspond to the 
Wilf classes of the patterns. 

Now, let $a_{n,k}$ be the number of binary trees with $n$ vertices which contains exactly $k$ copies of 
pattern $t$. The {\em enumerating generating function} of $t$ is 
$$
En_t(x,y) = \sum_{n\ge 1, k\ge 0} a_{n,k} x^n y^k.
$$
Two patterns are enumerating equivalent if their  enumerating generating functions are equal. As $Av_t(x) = En_t(x,0)$, this is a stronger version of Wilf equivalence.

The next conjecture states that the strong and weak version of Wilf classes should coincide.  
 
\begin{conjecture}[Rowland]
	\label{conj:rowland}
	If two patterns $s$ and $t$ are Wilf equivalent, then they are enumerating equivalent.
	\end{conjecture}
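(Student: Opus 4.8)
The plan is to recast the conjecture in the operadic language of this section. For a binary tree pattern $u$ let $P_u = \f(X)/(u)$ be the one-relator non-symmetric operad, with $X = X_0 \cup X_2$, $X_0 = \{x\}$, $X_2 = \{m\}$. As recalled above, $Av_u(x) = x\, G_{P_u}(x^2)$, so $s$ and $t$ are Wilf equivalent precisely when $G_{P_s} = G_{P_t}$, and this already forces $\deg s = \deg t$ since the number of leaves of the pattern is visible in $Av_u$. The function $En_u(x,y)$ specialises to $Av_u(x)$ at $y = 0$ and to the generating series of all planar binary trees at $y = 1$, so a priori it remembers more than the Wilf class; the conjecture says exactly that $En_t$ is a function of $G_{P_t}$ and $\deg t$ alone.

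The main tool I would use is a cluster expansion for tree patterns, modelled on the Goulden--Jackson method for words. Writing $y^{k} = \sum_{j} \binom{k}{j}(y-1)^{j}$ yields
$$
En_u(x,y) = \sum_{j\ge 0} (y-1)^{j} B_j(x),
$$
where $B_j(x)$ counts pairs $(T, S)$ with $T$ a tree and $S$ a $j$-element set of occurrences of $u$ in $T$. Collapsing each maximal family of pairwise overlapping distinguished occurrences to a single super-vertex gives a substitution identity that writes the right-hand side as a composition of the (universal) free binary tree series with a cluster series $C_u(x, y-1)$ --- the generating series of trees assembled from overlapping copies of $u$, each copy weighted by $y-1$. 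The crucial point is that $C_u$ depends only on the ways $u$ overlaps itself, and recursively on the overlaps of those overlaps, which in operadic terms is precisely the bar (or Anick) complex of the monomial operad $P_u$: the degree-$1$ part spanned by $u$, the degree-$2$ part by the minimal trees in which $u$ occurs twice with overlap, and so on. So $En_u = \Phi(x, y; g_{H_1}, g_{H_2}, \dots)$ for a universal $\Phi$, with $g_{H_1}(z) = z^{\#\text{leaves}(u)}$ already a Wilf invariant, and the conjecture reduces to showing that every $g_{H_i}(z)$ is determined by $G_{P_u}(z)$.

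For that last reduction I would invoke the Quillen-homology machinery of Section~\ref{sec:homology}. The Euler--Poincar\'e identity for the bar resolution of the monomial operad $P_u$ relates the compositional inverse of $G_{P_u}$ to the alternating sum $\sum_{i\ge 0} (-1)^{i} g_{H_i}$. Taken alone this is one functional equation in infinitely many unknowns $g_{H_i}$; the extra leverage should come from the special shape of the syzygy complex of a \emph{single} monomial relation, in which every syzygy at homological degree $i+1$ is literally an overlap occurring inside a syzygy at degree $i$, so the complex carries a nested, chain-like filtration. The plan is to exploit this nesting to peel off $g_{H_1}, g_{H_2}, \dots$ one homological degree at a time from $G_{P_u}$ --- in direct analogy with the word case, where the cluster series is governed by the finite-degree autocorrelation polynomial, that polynomial is recovered from $Av_u(x)$ by a rational manipulation, and the Guibas--Odlyzko formula then produces the occurrence statistics automatically.

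The hard part is exactly this recovery step. A tree pattern can overlap itself in many geometrically distinct ways at once, so its syzygy complex can be arbitrarily wide at each level and there is no a priori degree bound forcing the Euler identity to pin down the $g_{H_i}$; what is needed is a genuinely new structural fact --- a minimality or positivity property of the bar complex of a one-relator monomial operad ensuring that its Betti series are Wilf invariants --- and this is where I expect the argument to be most at risk, and where the conjecture itself might fail for large $n$. As a fallback one could try to turn the verification of Section~\ref{sec:one_rel_rowland} into a proof arity by arity: since there are only $A(n)$ Wilf classes in arity $n$, it would suffice to establish an effective bound $N(n)$ such that agreement of $En_s$ and $En_t$ through degree $N(n)$ forces $En_s = En_t$, reducing the conjecture in each arity to a finite computation of exactly the kind carried out here. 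A third, more combinatorial route is to determine the generators of the Wilf-equivalence relation on trees --- mirror symmetry together with whatever sporadic families arise --- and to exhibit an occurrence-preserving bijection for each; this has the same difficulty repackaged, namely understanding Wilf equivalence structurally rather than experimentally.
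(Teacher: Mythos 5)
This statement is labelled a \emph{conjecture} in the paper, and the paper does not prove it: Rowland verified it for patterns with at most $7$ leaves, and Section~\ref{sec:one_rel_rowland} only extends the computational evidence (conjecturally up to $12$ leaves). So the first thing to say is that you are not competing with a proof in the paper --- there is none --- and your text is, by its own admission, a research programme rather than a proof.

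The genuine gap is the step you yourself flag: showing that the cluster series $C_u$, equivalently the sequence of ``overlap'' generating functions $g_{H_i}$, is determined by $G_{P_u}$. This is not a technical lemma left to the reader; it is a restatement of the conjecture. Concretely, your $g_{H_i}$ are the generating functions of the degree-$i$ generators of the Dotsenko--Khoroshkin resolution of the monomial operad $P_u$, i.e.\ the numbers $b_{k,n}$ of Section~\ref{sec:homology}, and the Proposition there says that $B_P(z,y)=\sum b_{k,n}z^ny^k$ and $En_u(x,y)$ determine each other. Hence ``every $g_{H_i}$ is a Wilf invariant'' is \emph{equivalent} to ``$En_u$ is a Wilf invariant'', which is Conjecture~\ref{conj:rowland} itself (and is the unproved homological conjecture at the end of Section~\ref{sec:homology}). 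The Euler--Poincar\'e identity gives, as you note, a single relation among infinitely many unknowns, and the ``nested filtration'' you hope to exploit is not chain-like: a single tree pattern can overlap itself in several geometrically independent ways, so the syzygy poset branches and no degree-by-degree peeling is available. Your reduction is therefore circular rather than progressive. Of your fallbacks, the finite-verification route is the only one with teeth: for fixed arity $n$ the elimination produces polynomial equations of bounded degree and height, so two distinct algebraic solutions can agree only to a computable finite order, which in principle turns each arity into a finite check --- this is essentially what the paper does to settle $A(8)=43$, where two superficially different equations~(\ref{eq:1}) and~(\ref{eq:2}) turn out to define the same series because one polynomial divides the other. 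But that yields verification arity by arity, never the conjecture in full generality.
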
 

In the next section, we discuss a  homological interpretation of the enumerating generating function and the above Rowland conjecture. 

In~\cite{rowland2010pattern}, the algebraic equations defining the functions $Av_t(x)$ and $
En_t(x,y)$ are listed for all patterns with at most 6 leaves. Moreover, the number $A(n)$ of Wilf classes of the $n$-leaves patterns are calculated up to $n=8$. These numbers are 
$1,1,1,2,3,7,15,44$. 
Moreover, the calculations for $n\leq 7$ confirm Conjecture~\ref{conj:rowland}.

Note that the sequence $A(n)$ is listed in 
Sloane's  On-Line Encyclopedia of Integer Sequences as A161746, see https://oeis.org/A161746.

Now, we have tried to take the next step toward the enumeration of Wilf classes of one-relator binary operads.

First, we have provided a calculation similar to Rowland's. 
We have used the methods from~\cite{rowland2010pattern} to construct 
the systems of algebraic equations.
The systems of equations were generated by an { ad hoc} 
C\# software. We then proceeded with  
the {\em Wolfram Mathematica} elimination of variables procedure. 
As both Groebner bases methods and computer performance have been improved within the last decade, we hoped to calculate the number $A(n)$ of Wilf classes $A(n)$ and the number $E(n)$ of enumeration classes for the next values of $n$. However, we have done no more than confirm Rowland's calculation, that is, to list 15 equations 
for $En(x,y)$ for $n=7$ and 44 equations for $Av(x)$ for $n=8$.

The systems of algebraic equations over the polynomials give recurrent equations for the coefficients of the power series solutions. As our systems are of a rather special kind, these recurrent equations are rather simple in our case. 
Following this, we developed a Python package (called {\em Friend reduce}, as we have parallelized the calculations among our friends' laptops)
which, for each of our systems of algebraic equations, finds its solution in the form of the collection of  truncated formal power series. Given a number $n$ of the leaves, for each pattern $t$ with $n$ leaves we have, particularly,  calculated all formal power series $Av_t(x)$ and $En_t(x)$ truncated up to some power $x^k$. The numbers $\overline A(n)$ and $\overline E(n)$ of such truncated series give lower bounds for the numbers $A(n)$
and $E(n)$.

Both packages can be downloaded at \\ https://github.com/atcherkasov/tree-Wilf-classes.

The results of the calculations are presented in Table~\ref{tab:Wilf_calculations_results}.

\begin{table}
	\caption{Lower bounds for the numbers Wilf classes of one-relator quadratic operads (the generating functions are calculated up to $o(x^k)$)}
	\label{tab:Wilf_calculations_results}
	\begin{tabular}{lccccc}
		\toprule
		$n$&8&9 & 10 & 11 &12 \\
		\midrule
		$A(n) = $   & 43 & 	 & &  &  \\
		$A(n) \ge $   & & 	$ 136$ & $ 458$ & $ 1662$ & $ 6096$ \\
		$k$ & 257 &	257 &	257	& 257 &	201\\
		\midrule
		$E(n)  \ge $   & 43 & 	$ 136$ & $ 458$ & $ 1662$ & $ 6096$ \\
		$k$ & 257 &	257 &	257	& 201 &	157\\
		\bottomrule
	\end{tabular}
\end{table}

Note that for $n=8$, we give the exact value $A(8)=43$ in place of the lower bound.  Moreover, this equality contradicts the value $44$ of $A(8)$ listed in~\cite[p.~756]{rowland2010pattern}. Nevertheless, we are sure of this equality for the following reasons. After the elimination of variables, we get 44 different equations for $A_t(x)$ with 8-leave patterns $t$. 
Still, one can show that two of these equations define the same algebraic power series. These are the equations 
\begin{multline}
	\label{eq:1}
(-x+x^3-x^5-x^7+x^9+x^{13})\\
+G\cdot (1-2 x^2+4  x^4+2  x^6-6  x^8+2  x^{10}-4  x^{12}+3  x^{14})\\
+G^2\cdot (-3
 x^3+9x^7-8  x^9+7  x^{11}-8  x^{13}+3  x^{15})\\
+ G^3 \cdot (-3  x^6+7  x^8-8  x^{10}+6  x^{12}-3  x^{14}+x^{16})\\
+G^4\cdot (3  x^9-2  x^{11}-3  x^{13}+2  x^{15})+G^5\cdot (2  x^{12}-3  x^{14}+x^{16})=0	
\end{multline}
and
\begin{multline}
	\label{eq:2}
(x-x^3+2  x^5+x^9)+G\cdot (-1+2  x^2-6  x^4+2  x^6-3  x^8+2  x^{10})\\
+G^2\cdot (4  x^3-3  x^5+3  x^7-3  x^9+x^{11})\\
+G^3\cdot (-x^6+x^{10})+G^4\cdot (-2x^9+x^{11})=0,  
\end{multline}
where $G = Av(x)$. 
Indeed, the polynomial on the left hand side of~(\ref{eq:1})
is divisible by the one in~(\ref{eq:2}), as we have checked by {\em Wolfram Mathematica}.  Therefore, any  formal power series solution of~(\ref{eq:2}) is also a solution of~(\ref{eq:1}). As the solutions satisfying the initial conditions are unique for each equation, these solutions are the same. 
This gives the inequality $A(8) \le 43$.
Using the result $A(8) \ge 43$ obtained by the calculation of truncated series, we conclude that $A(8) = 43$.

We see that the number of different formal power series stabilizes at some values of $k$. That is why we establish

\begin{conjecture}
	The values of the numbers $A(n)$ and $E(n)$ of the Wilf classes and enumeration classes  
	of one-relator operads  with the relation of degree $n$ 
	for $n=8, 9,10,11,12$ are equal to the upper bounds listed in Table~\ref{tab:Wilf_calculations_results}.
\end{conjecture}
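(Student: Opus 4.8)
The statement is computational in nature, so the plan would be to reduce the conjecture, for each fixed $n\in\{8,9,10,11,12\}$, to a finite \emph{certified} calculation, in the same spirit as the confirmed value $A(8)=43$. There are only finitely many binary tree patterns $t$ with $n$ leaves, and they are easy to list. For each such $t$ one builds the algebraic system for the avoidance function $Av_t(x)$ — and, in the stronger form, for the enumerating function $En_t(x,y)$ — using either Rowland's construction or the unambiguous grammar of Section~\ref{sec:operads_cf} together with the Chomsky--Sch\"utzenberger theorem, and then eliminates all the auxiliary variables to obtain a single polynomial equation $Q_t(x,G)=0$ with $G=Av_t(x)$ (respectively $Q_t(x,y,G)=0$). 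Passing to the irreducible factor actually annihilating the series, one gets a minimal polynomial $\widehat Q_t$, and the series is then determined by $\widehat Q_t$ together with finitely many initial coefficients. Hence two patterns $t_1,t_2$ are Wilf equivalent precisely when $\widehat Q_{t_1}$ and $\widehat Q_{t_2}$ agree up to a scalar and the corresponding truncations of $Av_{t_1}$ and $Av_{t_2}$ agree to order $\deg_{G}\widehat Q_{t_1}$ — this is exactly the argument applied in the excerpt to the two degree-$8$ equations~(\ref{eq:1}) and~(\ref{eq:2}). Counting the resulting equivalence classes yields the exact value of $A(n)$ (and of $E(n)$, which would simultaneously confirm the Rowland-type coincidence $A(n)=E(n)$ in these degrees); matching this count against the lower bound $\overline A(n)$ already obtained from truncated series via \emph{Friend reduce} would finish the proof.

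It is worth separating the two halves of the certification, which behave quite differently. Showing that two patterns are \emph{not} Wilf equivalent requires only one differing coefficient of $Av_t$, and the truncated series already recorded in Table~\ref{tab:Wilf_calculations_results} supply all such witnesses; this is precisely why the numbers $\overline A(n)$ are valid lower bounds. The delicate half is certifying the \emph{equalities}: for every cluster of patterns whose truncations to order $x^{k}$ coincide one must prove that the full avoidance series coincide. The elimination route does this through the minimal polynomials as above; a purely combinatorial alternative would be to exhibit explicit height-preserving bijections between the corresponding sets of avoiding trees, in the style of Rowland's bijective proofs for small $n$, but producing such bijections for all coincidences up to $n=12$ looks at least as hard.

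The main obstacle — and the reason this remains a conjecture — is that the elimination step is computationally out of reach for $n\ge 9$. The triangular system for a degree-$n$ pattern has $N=N(n)$ variables, each defined by a polynomial of degree at most $2$ (the maximal arity of a binary operad), so the only a priori bound on $\deg_{G}Q_t$ is of order $2^{2N}$, doubly exponential in $n$, and in practice Gr\"obner-basis elimination already stalls at $n=9$. A natural way to bypass elimination would be to establish an explicit threshold $k_0(n)$ such that any two avoidance functions of $n$-leaf patterns are equal as soon as they agree modulo $x^{k_0(n)}$; then the conjecture would follow from the truncations already computed, provided the recorded $k$ satisfies $k\ge k_0(n)$, and the observed stabilization of the counts would be explained rather than merely noted. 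Such a $k_0(n)$ exists in principle — it is controlled by the $x$-degree of the resultant $\mathrm{Res}_{G}(Q_{t_1},Q_{t_2})$, hence by the $x$- and $G$-degrees of the minimal polynomials — but the bounds we can prove for those degrees are again doubly exponential, far above the value $k\approx 257$ we can actually reach. So a genuine proof would require either a substantial speed-up of the elimination that exploits the very special sparse, triangular, arity-$2$ structure of these systems, or a new structural bound on the minimal polynomials of this particular family of avoidance series; this is where the real difficulty lies.
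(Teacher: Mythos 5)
This statement is a conjecture, and the paper offers no proof of it --- only the lower bounds $\overline A(n),\overline E(n)$ from truncated series, the observed stabilization of those counts in $k$, and the one certified case $A(8)=43$ obtained by elimination plus the divisibility argument for equations~(\ref{eq:1}) and~(\ref{eq:2}). Your proposal takes essentially the same approach (truncations for inequivalence, elimination and minimal polynomials with initial conditions for certified equalities) and correctly identifies why the statement remains conjectural for $n\ge 9$, so it is consistent with the paper; just note that your claimed criterion ``agree to order $\deg_G\widehat Q_t$'' should be read as ``enough initial coefficients to single out one power-series root,'' which is how the paper's $n=8$ argument actually proceeds.
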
 

This conjecture would imply that  Conjecture~\ref{conj:rowland} holds for all 
$n\le 12$.

\section{Wilf classes and homology}
\label{sec:homology}

The purpose of this section is to discuss a direction for future research. We believe that this research 
would give methods and algorithms to study refined versions of the Wilf classes based on homological invariants. 

In~\cite{dotsenko2013quillen} Dotsenko and Khoroshkin have introduced a differential graded resolution of a  shuffle monomial operad $P$ (moreover, the construction is generalized to the case of a general operad using  Groebner bases). The version of this construction for non-symmetric operads is described in~\cite[2.2.3]{kp}. It is a free operad generated by the generators of $P$ and additional trees that are in one-to-one correspondence with the monomials covered  by the monomial relations of $P$ in an ``indecomposable'' way.
Let $b_{k,n}$ denote the number of the degree $n$ generators 
containing exactly $k$ copies of the relations.
For some particular monomial operads (such as the quadratic ones and the operads defined by the patterns of Class 4.2 from~\cite{rowland2010pattern}), the above resolution is minimal, so that the numbers $b_{k,n}$ are equal to the Betti numbers $\beta_{k,n}$ of $P$ in the sense of Quillen homology (that is, the   number of the minimal generators of internal degree $n$ and homological degree $k$ in the minimal differential graded model of $P$). In general, we have useful inequalities $b_{n,k} \ge \beta_{n,k}$.  

Let $B_P(z,y) = \sum_{k,n} b_{k,n} z^n y^k$ be the  generating function. One can consider it as a version of the 
Poincare series of $P$. 
One can consider ``homological'' Wilf classes of the operads by 
saying that two operads are homologically equivalent if their 
functions $B_P(s,y)$ coincide. 

\begin{proposition}
	The functions $B_P(z,y)$ and $En_t(x,y)$ can be uniquely defined in terms of each other. Therefore, two  one-relator binary operads belong to the same homological Wilf class 
	if and only if their relations are enumeration equivalent.
\end{proposition}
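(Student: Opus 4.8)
The plan is to produce an explicit, invertible transformation between $En_t(x,y)$ and $B_P(z,y)$; granting this, the stated equivalence is immediate, since then $En_s=En_t$ if and only if $B_P=B_Q$. First I would rewrite both generating functions in purely combinatorial terms. Writing $t$ for the single monomial relation, a binary tree $T$ \emph{contains a copy of $t$} at an internal vertex $v$ precisely when $t$ divides $T$ at $v$ in the operadic sense (the leaves of $t$ matching arbitrary subtrees of $T$); let $c_t(T)$ be the number of such $v$, so that $En_t(x,y)=\sum_T x^{\#V(T)}y^{c_t(T)}$, the sum over all planar binary trees. On the other side, by the construction of Dotsenko--Khoroshkin~\cite{dotsenko2013quillen} (see also~\cite[2.2.3]{kp}) the extra generators of the differential graded resolution of $P$ are exactly the binary trees $w$ covered by copies of $t$ in an indecomposable way, and $B_P(z,y)=z^{2}+\sum_{w}z^{\,\ell(w)}y^{\,c_t(w)}$, where $\ell(w)$ is the number of leaves of $w$ (the term $z^{2}$ being the single generator $m$). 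Thus both sides are determined by the combinatorics of $t$ alone; the two sums use the vertex- and the leaf-grading respectively, related by $z\leftrightarrow x^{2}$.

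The bridge is the tree form of the inclusion--exclusion (cluster) principle, which is exactly dual to the Euler--Poincar\'e relation carried by the DK differential. Given $T$, the set of its $t$-copies carries an overlap relation (two copies related when the vertex sets they cover meet); a connected class together with the region of $T$ it spans is an indecomposably covered tree --- a DK generator --- with a number of free ends onto which the remaining pieces of $T$ are grafted. Passing from counting copies of $t$ to \emph{selecting a subset} of them, i.e. substituting $y\mapsto 1+y$, and then sorting the selected copies into their clusters, converts the self-similar decomposition of binary trees into a functional equation of the schematic form
\[
En_t(x,1+y)\;=\;x\;+\;\mathcal{C}\bigl(x,\;En_t(x,1+y),\;y\bigr),
\qquad
\mathcal{C}(x,Z,y)=Z^{2}+\sum_{w}x^{\,e(w)}Z^{\,f(w)}y^{\,c_t(w)},
\]
where $w$ ranges over the DK generators, $f(w)$ is its number of free ends and $e(w)$ accounts for its own vertices; equivalently one may use the Rowland-type algebraic system of~\cite{rowland2010pattern,kp} with one unknown per sub-tree-pattern of $t$, whose shape is dictated by the divisibility poset of $t$. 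Since $\mathcal{C}$ is polynomial in $Z$ of bounded degree, this is a nondegenerate algebraic relation, and one can eliminate in either direction: $\mathcal{C}$, hence $B_P$, is recovered from $En_t$ by a finite elimination of $Z$, and conversely $En_t$ is recovered from $\mathcal{C}$ as the unique power-series solution of the functional equation with the correct initial term. This shows that $B_P$ and $En_t$ determine one another, and hence that two one-relator binary operads are homologically equivalent if and only if their relations are enumeration equivalent.

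I expect the delicate point to be making the dictionary ``clusters of $t$-copies $\leftrightarrow$ DK generators'' exact, and in particular matching the two $y$-weightings: in $B_P$ a generator $w$ is weighted by \emph{all} of its copies of $t$, whereas a cluster comes a priori only with its distinguished covering subset. One must verify that for an indecomposably covered tree this covering subset is forced (so that the two counts coincide), that ``indecomposable'' in the sense of~\cite{dotsenko2013quillen} agrees with ``connected overlap graph'' for a single relation, and that the \emph{junction phenomenon} --- extra copies of $t$ created when two generators are grafted at a free end --- is correctly bookkept; it is precisely this phenomenon that turns the correspondence into an inclusion--exclusion rather than a naive bijection, and that is recorded by the DK differential. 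Once this is settled, the functional equation, the two eliminations, and the concluding ``if and only if'' are routine.
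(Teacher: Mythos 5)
The paper states this proposition without any proof, so there is nothing to compare your argument against; I can only assess it on its own terms. Your overall skeleton --- substitute $y\mapsto 1+y$ to pass from counting copies of $t$ to marking subsets of copies, decompose a marked tree into overlap-connected clusters, read each cluster as a Dotsenko--Khoroshkin generator, and extract a functional equation of Catalan type --- is the natural route and very likely the intended one. Two points, however, are genuine gaps rather than routine verifications. First, the step ``$\mathcal{C}$, hence $B_P$, is recovered from $En_t$ by a finite elimination of $Z$'' is not valid as stated: a three-variable series $\mathcal{C}(x,Z,y)$ is not determined by its restriction to the hypersurface $Z=En_t(x,1+y)$. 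The recovery works only after you observe that for binary trees $e(w)=f(w)-1$, so $\mathcal{C}$ collapses to a function of the single combination $xZ$ (and $y$); then $u=x\,En_t(x,1+y)=x^2+\cdots$ is an invertible substitution in $x^2$ for each fixed $y$, and $B_P$ is recovered by inverting that substitution, not by algebraic elimination. You should make this collapse explicit, since without it the ``both directions'' claim fails.

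Second, and more seriously, the verification you defer --- that for an indecomposably covered tree the connected covering subset is forced, so that $y^{|C|}$ agrees with $y^{c_t(w)}$ --- is false. Take $t$ to be the left comb with three internal vertices and $w$ the left comb with five internal vertices (spine $1,\dots,5$). Then $w$ contains three copies of $t$, rooted at $1$, $2$, $3$, but the two copies rooted at $1$ and $3$ already overlap (they share vertex $3$) and their union is all of $w$; so $\{c_1,c_3\}$ and $\{c_1,c_2,c_3\}$ are both connected covering subsets of the same spanned tree. Consequently the cluster generating function arising from your decomposition is $\sum_{(w,C)} z^{\ell(w)}y^{|C|}$, summed over pairs (tree, connected covering subset), which is \emph{not} the series $B_P(z,y)-z^2=\sum_w z^{\ell(w)}y^{c_t(w)}$ defined in the paper (in the example, $y^2+y^3$ versus $y^3$ for the arity-$6$ term). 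Your functional equation therefore relates $En_t$ to the pair-indexed series, and an additional argument --- e.g.\ a M\"obius/inclusion--exclusion over covering subsets of the divisor poset of each $w$, or a different reading of which objects the DK generators and the numbers $b_{k,n}$ actually index --- is needed to pass from that series to $B_P$ as defined. Until this is supplied, the ``therefore'' in the proposition is not established by your argument.
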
	

Then Rowland's Conjecture~\ref{conj:rowland} implies

\begin{conjecture}
	In the class of one-relator binary operads $P$, 
	the Betti numbers $b_{n,k}$ are uniquely defined by the 
	generating series $G_P(z)$.
\end{conjecture}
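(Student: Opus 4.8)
The plan is to derive the statement as a formal consequence of Rowland's Conjecture~\ref{conj:rowland}, by chaining together the equivalences that link the several generating functions attached to a one-relator binary operad. Each link is already at hand: the change of variables of Section~\ref{sec:one_rel_rowland} connecting $G_P(z)$ to the avoidance series, Rowland's Conjecture connecting avoidance to enumeration, and the preceding Proposition connecting enumeration to the homological generating function $B_P(z,y)$.

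First I would translate equality of the generating series $G_P(z)$ into Wilf equivalence of the defining patterns. For a one-relator binary operad $P$ with relation $t$, the identity $G_P(z) = Av_t(\sqrt z)/\sqrt z$ of Section~\ref{sec:one_rel_rowland} inverts to $Av_t(x) = x\, G_P(x^2)$, so that $G_P(z)$ and the avoidance function $Av_t(x)$ determine one another. Consequently, two one-relator binary operads $P_1,P_2$ with relations $t_1,t_2$ satisfy $G_{P_1}(z) = G_{P_2}(z)$ if and only if $Av_{t_1}(x) = Av_{t_2}(x)$, that is, if and only if $t_1$ and $t_2$ are Wilf equivalent.

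Next I would apply the hypothesis: Rowland's Conjecture~\ref{conj:rowland} upgrades Wilf equivalence to enumeration equivalence, so $Av_{t_1} = Av_{t_2}$ forces $En_{t_1}(x,y) = En_{t_2}(x,y)$. The preceding Proposition then supplies the final link, since it asserts that $B_P(z,y) = \sum_{n,k} b_{n,k}\, z^n y^k$ and $En_t(x,y)$ are mutually expressible; hence enumeration equivalence yields $B_{P_1}(z,y) = B_{P_2}(z,y)$. Comparing coefficients of $z^n y^k$ then gives $b_{n,k}(P_1) = b_{n,k}(P_2)$ for all $n,k$, which is exactly the assertion that the numbers $b_{n,k}$ depend only on $G_P(z)$.

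The principal obstacle is not computational but logical: the argument establishes only a conditional implication, because Rowland's Conjecture~\ref{conj:rowland} is itself open. Every other step is a reversible translation between equivalent encodings of the same combinatorial data, so the whole force of the conclusion is concentrated in that single hypothesis. I would accordingly phrase the result as holding conditionally on~\ref{conj:rowland}, and I would make explicit that the only point at which Wilf equivalence of patterns is genuinely strengthened to enumeration equivalence --- and hence the only non-formal input --- is precisely that conjecture.
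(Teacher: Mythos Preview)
Your argument is correct and matches the paper's approach exactly: the paper presents this statement as a conjecture and justifies it only by the one-line remark that Rowland's Conjecture~\ref{conj:rowland} implies it, via the preceding Proposition linking $En_t$ and $B_P$. You have simply spelled out that implication in full, and your explicit acknowledgment that the result is conditional on Conjecture~\ref{conj:rowland} is precisely the status the paper gives it.
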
 

The calculations described in Section~\ref{sec:one_rel_rowland}
confirm this conjecture provided that the degree of the operad relation is less or equal to 12. 
If the conjecture turns out to be true, it will be interesting to get an answer to the question: are Quillen  Betti numbers $\beta_{k,n}$ of one-relator binary operads also determined by the Wilf class?

For the direct calculation of Quillen homology using the irreducible elements of this resolution, one can use the simplicial homology (cf. chain complexes of simplices mentioned in Subsetcion 2.1 of~\cite{dotsenko2013quillen}). 
It is our future plan to apply computational topology software for evaluating the Poincare series of operads and to provide a stronger homological classification.

\bibliographystyle{ACM-Reference-Format}
\bibliography{biblio_operads}

\end{document}